\documentclass[12pt]{amsart}
\usepackage{a4wide,amsmath,amssymb}

\newtheorem{theorem}{Theorem}   
\newtheorem{lemma}[theorem]{Lemma}   
\newtheorem{proposition}[theorem]{Proposition}

\let\pa\partial  
\let\na\nabla  
\let\eps\varepsilon  
\newcommand{\diver}{\operatorname{div}}    
\newcommand{\N}{{\mathbb N}}  
\newcommand{\R}{{\mathbb R}}   
\newcommand{\C}{{\mathbb C}} 

\newcommand{\V}{{\mathcal V}}


\begin{document}  

\title[Maxwell-Stefan systems]{Existence analysis of Maxwell-Stefan systems for
multicomponent mixtures}  
  
\author[A. J\"ungel]{Ansgar J\"ungel}
\address{Institute for Analysis and Scientific Computing, Vienna University of  
	Technology, Wiedner Hauptstra\ss e 8--10, 1040 Wien, Austria}
\email{juengel@tuwien.ac.at}
\author[I. V. Stelzer]{Ines Viktoria Stelzer}
\address{Institute for Analysis and Scientific Computing, Vienna University of  
	Technology, Wiedner Hauptstra\ss e 8--10, 1040 Wien, Austria}
\email{ines.stelzer@tuwien.ac.at} 

\thanks{The authors acknowledge partial support from   
 the Austrian Science Fund (FWF), grants P22108, P24304, and I395, and from
 the Austrian-French Project of the Austrian Exchange Service (\"OAD).
 The first author thanks Prof.\ W.~Dreyer for inspiring discussions on
 Maxwell-Stefan systems}  
 
\begin{abstract}
Maxwell-Stefan systems describing the dynamics of the molar concentrations
of a gas mixture with an arbitrary number of components are analyzed in a bounded
domain under isobaric, isothermal conditions. 
The systems consist of mass balance equations and equations for the chemical 
potentials, depending on the relative velocities, supplemented with
initial and homogeneous Neumann boundary conditions.
Global-in-time existence of bounded weak solutions to the quasilinear parabolic system
and their exponential decay to the homogeneous steady state are proved.
The mathematical difficulties are due to the singular Maxwell-Stefan diffusion matrix,
the cross-diffusion coupling, and the lack of standard maximum principles.
Key ideas of the proofs are the Perron-Frobenius theory for quasi-positive matrices, 
entropy-dissipation methods, 
and a new entropy variable formulation allowing for the proof of 
nonnegative lower and upper bounds for the concentrations.
\end{abstract}  
 
\keywords{Maxwell-Stefan systems, cross-diffusion, Perron-Frobenius
theory, entropy-dissipation methods, existence of solutions, long-time behavior
of solutions.}  
 
\subjclass[2000]{35K55, 35A01, 35B40, 35Q79.}  
 
\maketitle  
 

\section{Introduction}  

The Maxwell-Stefan equations describe the diffusive transport of multicomponent mixtures \cite{Max66,Ste71}. 
Applications include various fields like sedimentation, dialysis, electrolysis,
ion exchange, ultrafiltration, and respiratory airways \cite{BGG10,WeKr00}.
The model bases upon inter-species force balances,
relating the velocities of the species of the mixture. 
It is well-known that the usual Fickian diffusion model, 
which states that the flux of a chemical substance is 
proportional to its concentration gradient, is not able to describe, e.g.,
uphill or osmotic diffusion phenomena in multicomponent mixtures,
as demonstrated experimentally by Duncan and Toor \cite{DuTo62}.
These phenomena can be modeled by using the theory of non-equilibrium thermodynamics,
in which the fluxes are assumed to be linear 
combinations of the thermodynamic forces \cite[Chap.~4]{DeMa62}. 
However, this model requires
the knowledge of all binary diffusion coefficients, which are not always easy to
determine, and the positive semi-definiteness of the diffusion matrix. 
The advantage of the Maxwell-Stefan approach is that
it is capable to describe uphill diffusion effects without assuming particular
properties on the diffusivities (besides symmetry).

We consider an ideal gaseous mixture consisting of $N+1$ components with molar 
concentrations $c_i(x,t)$ for $i=1,\ldots,N+1$ (see Appendix \ref{sec.deriv}).
Since we concentrate our study on cross-diffusion effects, we suppose isothermal
and isobaric conditions. 
Then the total molar concentration $\sum_{i=1}^{N+1}c_i$ is constant and we set
this constant equal to one. 
More general situations are investigated, e.g., in \cite{Gio99}.
The dynamics of the mixture is given by the mass balance equations
\begin{equation}\label{eq1}
  \pa_t c_i + \diver J_i = r_i(c) \quad\mbox{in }\Omega,\ t>0,\ i=1,\ldots,N+1,
\end{equation}
where $J_i=c_i u_i$ denotes the molar flux, $u_i$ the mean velocity,
$r_i$ the net production rate of the $i$-th component, 
$c=(c_1,\ldots,c_{N+1})^\top$, and $\Omega\subset\R^d$ ($d\ge 1$) is a bounded domain.
We have assumed above that the averaged mean velocity 
vanishes, $\sum_{i=1}^{N+1}c_i u_i=0$.
Then the conservation of the total mass implies
that the total production rate vanishes, $\sum_{i=1}^{N+1}r_i(c)=0$.
The fluxes are related to the concentration gradients by
\begin{equation}\label{eq2}
  \na c_i = -\sum_{j=1,\,j\neq i}^{N+1}\frac{c_j J_i-c_i J_j}{D_{ij}}, \quad
  i=1,\ldots,N+1,
\end{equation}
where $D_{ij}>0$ for $i\neq j$ are some diffusion coefficients.
The derivation of these relations is sketched in Appendix \ref{sec.deriv}.

The aim of this paper is to prove the global-in-time existence of weak solutions
to system \eqref{eq1}-\eqref{eq2} for constant coefficients
$D_{ij}>0$, supplemented with the boundary and initial conditions
\begin{equation}\label{bic}
  \na c_i\cdot\nu = 0\quad\mbox{on }\pa\Omega,\ t>0, \quad
  c_i(\cdot,0)=c_i^0\quad\mbox{in }\Omega, \quad i=1,\ldots,N+1,
\end{equation}
where $\nu$ is the exterior unit normal vector on $\pa\Omega$.
There are several difficulties to overcome in the analysis of the 
Maxwell-Stefan system. 

{\em First}, the 
molar fluxes are not defined {\em a priori}
as a linear combination of the concentration
gradients, which makes it necessary to invert the flux-gradient relations \eqref{eq2}.
As the Maxwell-Stefan equations are linearly dependent, 
we need to invert the system on a subspace, yielding the
diffusion matrix $\tilde A^{-1}$.
In the engineering literature, this inversion is often done in an approximate way. 
For instance, a numerical solution procedure for $N=3$ was developed in \cite{APP03}
and the special case $D_{ij}=1/(f_if_j)$ for some constants $f_i>0$
was investigated in \cite{BKR68}.

{\em Second}, equations \eqref{eq1}-\eqref{eq2} are coupled, which translates
into the fact that $\tilde A^{-1}$ is generally a full matrix with nonlinear
solution-dependent coefficients. Thus, standard tools 
like the maximum principle or regularity theory are not available. 
In particular, it is not clear how to prove nonnegative lower
and upper bounds for the concentrations. Moreover, it is not clear whether
$\tilde A^{-1}$ is positive semi-definite or not, such that even the
proof of local-in-time existence of solutions is nontrivial.

{\em Third}, it is not standard to find suitable a priori estimates which
allow us to conclude the global-in-time existence of solutions.

In view of these difficulties, it is not surprising that
there are only very few analytical results
in the mathematical literature for Maxwell-Stefan systems. Under some
general assumptions on the nonlinearities, Giovangigli proved that there exists
a unique global solution to the whole-space Maxwell-Stefan system if
the initial datum is sufficiently close to the equilibrium state 
\cite[Theorem 9.4.1]{Gio99}. Bothe \cite{Bot11} showed the existence of a unique local
solution for general initial data. Boudin et al.\ \cite{BGS12} considered
a ternary system ($N=2$) and assumed that two diffusivities are equal. In this
situation, the Maxwell-Stefan system reduces to a heat equation for the
first component and a drift-diffusion-type equation for the second species.
Boudin et al.\ \cite{BGS12}
proved the existence of a unique global solution and investigated its
long-time decay to the stationary state.
Up to now, there does not exist a global existence theory for \eqref{eq1}-\eqref{eq2} 
for general initial data. We provide such a result in this work.

After inverting the flux-gradient relations \eqref{eq2} on a suitable subspace, 
the Maxwell-Stefan equations
become a parabolic cross-diffusion system. Amann derived in \cite{Ama89}
sufficient conditions for the solutions to such systems to exist globally
in time. The question if a given local solution exists globally is reduced
to the problem of finding a priori estimates in suitable Sobolev spaces. 
Another approach was developed in \cite{ChJu04,DGJ97,GVJ03,HiJu11,JuSt12} 
for systems arising in granular material modeling, population dynamics,
cell biology, and thermodynamics to treat cross-diffusion systems whose
diffusion matrix may be neither symmetric nor positive semi-definite.
The idea is to exploit the entropy structure of the model by introducing 
so-called entropy variables. In these variables, the new diffusion matrix becomes
positive semi-definite and an entropy-dissipation relation can be derived.
However, in all of the mentioned papers (except \cite{DGJ97}) systems
with two equations only have been considered.

In this paper, we combine and extend the entropy-dissipation technique of
\cite{ChJu04,DGJ97,GVJ03,HiJu11,JuSt12} as well as ideas of Bothe \cite{Bot11}
to overcome the above mathematical difficulties. We are able to prove the
global-in-time existence of weak solutions to \eqref{eq1}-\eqref{bic} for 
arbitrary diffusion matrices and general initial data. This result is obtained
under the following assumptions:
\begin{itemize}
\item Domain: $\Omega\subset\R^d$ ($d\le 3$) is a bounded domain with 
$\pa\Omega\in C^{1,1}$.
\item Initial data: $c_1^0,\ldots,c_{N}^0$ ($N\ge 2$)
are nonnegative measurable functions, $c_{N+1}^0=1-\sum_{i=1}^N c_i^0$, and
\begin{equation}\label{ass.init}
  \sum_{i=1}^N c_i^0\le 1.
\end{equation}
\item Diffusion matrix: $(D_{ij})\in\R^{(N+1)\times(N+1)}$ is a symmetric
matrix with elements $D_{ij}>0$ for $i\neq j$.
\item Production rates: The functions $r_i\in C^0([0,1]^{N+1};\R)$, $i=1,\ldots,N+1$,
satisfy
\begin{equation}\label{ass.r}
  \sum_{i=1}^{N+1}r_i(c)=0,\quad \sum_{i=1}^{N+1}r_i(c)\log c_i\le 0
  \quad\mbox{for all }0 < c_1,\ldots,c_{N+1}\le 1.
\end{equation}
\end{itemize}

We stress the fact that, although the diffusion coefficients $D_{ij}$ are 
constant, the diffusion matrix of the inverted Maxwell-Stefan system
depends on the molar concentrations in a nonlinear way (see below) and we need to 
deal with a fully coupled nonlinear parabolic system. Our proof also works
for diffusion coefficients depending on the concentrations $c_i$ 
if the coefficients $d_{ij}$ are bounded from above and below.

The regularity on the boundary $\pa\Omega$ is needed for the a priori estimate
$\|w\|_{H^2(\Omega)}\le C\|f\|_{L^2(\Omega)}$
of the elliptic problem $-\Delta w+w=f$ in $\Omega$, $\na w\cdot\nu=0$ on
$\pa\Omega$.

The inequality imposed on the production rates is needed to prove that the
entropy is nonincreasing in time. It is satisfied if, for instance,
$N=4$ and $r_1=r_3=c_2c_4-c_1c_3$, $r_2=r_4=c_1c_3-c_2c_4$
\cite{DFPV07}. For the existence result, the inequality can be weakened by
\begin{equation}\label{ass.r2}
  \sum_{i=1}^{N+1}r_i(c)\log c_i\le C_r
  \quad\mbox{for all }0 < c_1,\ldots,c_{N+1}\le 1,
\end{equation}
where $C_r>0$ is some constant independent of $c_i$ (see \eqref{gen.r}). 
This condition is satisfied, 
for instance, for the tumor-growth model in \cite{JuSt12}.

Our first main result is the global existence of solutions to \eqref{eq1}-\eqref{bic}.

\begin{theorem}[Global existence of solutions]\label{thm.ex}
Let the above assumptions hold. Then there exists a weak solution $(c_1,\ldots,c_{N+1})$
to \eqref{eq1}-\eqref{bic} satisfying
\begin{align*}
  & c_i\in L^2_{\rm loc}(0,\infty;H^{1}(\Omega)), \quad 
  \pa_t c_i\in L^2_{\rm loc}(0,\infty;\V'), \\
  & 0\le c_i\le 1, \quad i=1,\ldots, N, \quad c_{N+1}=1-\sum_{i=1}^N c_i\ge 0
  \quad\mbox{in }\Omega,\ t>0,
\end{align*}
where $\V'$ is the dual space of $\V=\{u\in H^2(\Omega):\na u\cdot\nu = 0$
on $\pa\Omega\}$. 
\end{theorem}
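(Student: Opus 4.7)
The plan is as follows. First, I would use $c_{N+1}=1-\sum_{i=1}^N c_i$ to eliminate one equation and invert the flux-gradient relations \eqref{eq2} on the corresponding $N$-dimensional subspace, obtaining a quasilinear system $\pa_t c + \diver(A(c)\na c) = r(c)$ for $c=(c_1,\ldots,c_N)^\top$; the invertibility and a quantitative ellipticity estimate for the reduced Maxwell-Stefan matrix should follow from the Perron-Frobenius theory of quasi-positive matrices advertised in the introduction. Since $A(c)$ is generally nonsymmetric and its sign is unclear in the concentration variables, I would then pass to the entropy variables
\begin{equation*}
  w_i = \log\frac{c_i}{c_{N+1}},\qquad i=1,\ldots,N,
\end{equation*}
associated with the Boltzmann-type entropy $H(c)=\sum_{i=1}^{N+1}\int_\Omega c_i(\log c_i -1)\,dx$. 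The inverse map $c_i(w)=e^{w_i}/(1+\sum_{j=1}^N e^{w_j})$ is a diffeomorphism from $\R^N$ onto the open simplex, so working in $w$ automatically forces $0<c_i<1$ and $c_{N+1}>0$, which is precisely how the pointwise bounds in the theorem are produced.

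Second, I would construct approximate solutions by combining implicit Euler time discretization of step $\tau$ with a higher-order $\eps$-regularization in the space $\V$ of the theorem. At each step one seeks $w\in\V$ satisfying
\begin{equation*}
  \tfrac{1}{\tau}\langle c(w)-c^{k-1},\phi\rangle + \int_\Omega (\na\phi)^\top B(w)\na w\,dx + \eps\big(\langle w,\phi\rangle_{H^2}+\langle w,\phi\rangle_{L^2}\big) = \int_\Omega r(c(w))\cdot\phi\,dx
\end{equation*}
for all $\phi\in\V$, with $B(w)=A(c(w))c'(w)$. Solvability at each step I would obtain from the Leray-Schauder fixed-point theorem: linearize at a given $\bar w$, apply Lax-Milgram in $\V$ (the $\eps$-term supplies the coercivity missing on the kernel of $B$), and gain compactness from the embedding $H^2\hookrightarrow W^{1,\infty}$, valid under the assumption $d\le 3$.

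Third, uniform a priori estimates are to be derived via the discrete entropy inequality obtained by choosing $\phi=w$: using \eqref{ass.r} (or its weakened form \eqref{ass.r2}) and telescoping in $k$ gives
\begin{equation*}
  H(c^k) + \tau\sum_{j=1}^k\int_\Omega (\na w^j)^\top B(w^j)\na w^j\,dx + \eps\tau\sum_{j=1}^k\|w^j\|_{\V}^2 \le H(c^0) + Ck\tau.
\end{equation*}
The main obstacle here, and the heart of the paper, is to prove that the dissipation term controls $\sum_{i=1}^{N+1}\|\na\sqrt{c_i}\|_{L^2(\Omega)}^2$, equivalently that the symmetric part of $B(w)$ is uniformly positive definite on $\R^N$ with constants depending only on the constraint $\sum_i c_i\le 1$; this is again where the Perron-Frobenius structure of the Maxwell-Stefan matrix plays the decisive role. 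Combined with discrete time-derivative bounds in $\V'$ read off from the weak formulation, the Aubin-Lions lemma then provides strong $L^2$ compactness of the piecewise-constant interpolants. Sending first $\eps\to 0$ and then $\tau\to 0$ (or along a diagonal sequence) eliminates the regularization and yields the weak solution with the claimed regularity, while the pointwise bounds $0\le c_i\le 1$ are preserved by a.e.\ convergence.
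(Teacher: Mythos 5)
Your plan coincides with the paper's proof in every structural respect: the reduction to $N$ components via $c_{N+1}=1-\sum_{i=1}^Nc_i$ and Perron--Frobenius inversion of the flux--gradient relations, the entropy variables $w_i=\log(c_i/c_{N+1})$ whose inverse map forces $0<c_i<1$, the implicit Euler discretization combined with the $\eps(\Delta^2+\mathrm{id})$ regularization in $\V$, the Lax--Milgram/Leray--Schauder solvability of each time step, the discrete entropy inequality obtained by testing with $w$, and the discrete Aubin--Lions compactness to pass to the limit $(\eps,\tau)\to0$.

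There is, however, one step in your outline that would fail if pursued as written. You assert that bounding the dissipation from below by $\sum_{i}\|\na\sqrt{c_i}\|_{L^2}^2$ is ``equivalently that the symmetric part of $B(w)$ is uniformly positive definite on $\R^N$ with constants depending only on the constraint $\sum_ic_i\le1$.'' This is false: $B=A_0^{-1}H^{-1}$ is symmetric positive definite pointwise (Lemma \ref{lem.b}), but since $H^{-1}$ has entries $(1-c_i)c_i$ and $-c_ic_j$, the matrix $B$ degenerates to zero as the concentrations approach the boundary of the simplex, and the paper explicitly notes at the start of Step 2 that no $w$-uniform lower bound on its smallest eigenvalue is available. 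The actual mechanism (Lemma \ref{lem.ed}) is a change of variables rather than a coercivity statement for $B$: one verifies the algebraic identity $\na w:B(w)\na w=\na\log c:(-\tilde A)^{-1}\na c=4\,\na\sqrt{c}:(-\tilde A_S)^{-1}\na\sqrt{c}$, where $A_S=C^{-1/2}AC^{1/2}$ is the symmetrization of the full $(N+1)\times(N+1)$ Maxwell--Stefan matrix, and the uniform constant $4/\Delta$ then comes from the spectral inclusion $\sigma((-\tilde A_S)^{-1})\subset(1/\Delta,1/\delta]$ of Lemma \ref{lem.tildea}. The degeneracy of $B$ in the $w$-gradient is exactly compensated by the degeneracy of the map from $\na w$ to $\na\sqrt{c}$, which is why the estimate is uniform even though neither factor is. Apart from this, you omit only the final relaxation $\eta\to0$ needed to admit initial data with vanishing components, which is routine given that all bounds are uniform in $\eta$.
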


To be precise, the existence theorem for \eqref{eq1}-\eqref{bic}
has to be understood as an existence result for a system in $N$ components
(see below), which is equivalent to \eqref{eq1}-\eqref{bic} as long as $c_i>0$ 
for all $i=1,\ldots,N$ and $\sum_{i=1}^N c_i<1$ are satisfied.

We explain the key ideas of the proof. For this, we write \eqref{eq2}
more compactly as
\begin{equation}\label{eq2a}
  \na c = A(c)J,
\end{equation}
where $A(c)\in\R^{(N+1)\times(N+1)}$ and $\na c=(\pa c_i/\pa x_j)_{ij}$, 
$J=(J_1,\ldots,J_{N+1})^\top\in\R^{(N+1)\times d}$. Using the Perron-Frobenius
theory for quasi-positive matrices, Bothe \cite{Bot11} characterized the
spectrum of $A(c)$ in case that $c_i>0$ for $i=1,\ldots,N$ and $\sum_{i=1}^N c_i<1$. 
Under these conditions, $A(c)$ can be inverted on its image. Then,
denoting its inverse by $\tilde A(c)^{-1}$, 
\eqref{eq1}-\eqref{eq2} can be formulated as
\begin{equation}\label{eq.A}
  \pa_t c - \diver(-\tilde A(c)^{-1}\na c) = r(c)\quad\mbox{in }\Omega,\ t>0,
\end{equation}
where $r(c)=(r_1(c),\ldots,r_{N+1}(c))^\top$.

It turns out that it is more convenient to eliminate the last equation for
$c_{N+1}$, which is determined by $c_{N+1}=1-\sum_{i=1}^N c_i$, and to work
only with the system in $N$ components.
We set $c'=(c_1,\ldots,c_{N})^\top$, $J'=(J_1,\ldots,J_{N})^\top$, and $r'(c)=(r_1(c),\ldots,r_N(c))^\top$. 
Using the facts that $c_{N+1}=1-\sum_{i=1}^N c_i$ and $\sum_{i=1}^N J_i=-J_{N+1}$,
system \eqref{eq2a} can be written as $\na c'=-A_0(c') J'$.
The matrix $A_0(c')$ defined in Section \ref{sec.matrix} is generally not symmetric
and it is not clear if it is positive definite.
If $A_0(c')$ is invertible (and we prove in Section \ref{sec.matrix} that this
is the case),  we can write system \eqref{eq1}-\eqref{eq2} as
\begin{equation}\label{eq.A0}
  \pa_t c' - \diver(A_0(c')^{-1}\na c') = r'(c) \quad\mbox{in }\Omega,\ t>0.
\end{equation}
Still, $A_0(c')^{-1}$ may be not positive (semi-) definite.

Our main idea to handle \eqref{eq.A0} is to exploit its entropy structure. 
We associate to this system the entropy density
\begin{equation}\label{def.h}
  h(c') = \sum_{i=1}^{N}c_i(\log c_i-1) + c_{N+1}(\log c_{N+1}-1), 
  \quad c_1,\ldots,c_{N}\ge 0,\ \sum_{i=1}^N c_i\le 1,
\end{equation}
where $c_{N+1}=1-\sum_{i=1}^N c_i$ is interpreted as a function of the 
other concentrations. Furthermore, we define the entropy variables
\begin{equation}\label{def.w}
  w_i = \frac{\pa h}{\pa c_i} = \log\frac{c_i}{c_{N+1}}, \quad i=1,\ldots,N,
\end{equation}
and we denote by $H(c')=\na^2 h(c')$ the Hessian of $h$ with respect 
to $c'$. Then \eqref{eq.A0} becomes
\begin{equation}\label{eq.B}
  \pa_t c' - \diver(B(w)\na w) = r'(c)\quad\mbox{in }\Omega,\ t>0,
\end{equation}
where $w=(w_1,\ldots,w_N)^\top$ and $B(w)=A_0(c)^{-1}H(c)^{-1}$ is 
symmetric and positive definite (see Lemma \ref{lem.b}).
The advantage of the formulation in terms of the entropy variables
is not only that the diffusion matrix $B(w)$ is positive definite (which allows us
to apply the Lax-Milgram lemma to a linearized version of \eqref{eq.B})
but it yields also positive lower and upper bounds for the concentrations. 
Indeed, inverting \eqref{def.w}, we find that
\begin{equation}\label{inv.w}
  c_i = \frac{e^{w_i}}{1+e^{w_1}+\cdots+e^{w_N}}, \quad i=1,\ldots,N.
\end{equation}
Therefore, if the functions $w_i$ are bounded, the concentrations
$c_i$ are positive and $\sum_{i=1}^N c_i < 1$
which implies that $c_{N+1}=1-\sum_{i=1}^N c_i > 0$. This observation is
a key novelty of the paper.

Formulation \eqref{eq.A} is needed to derive a priori estimates which are an
important ingredient for the global existence proof.
Differentiating the entropy ${\mathcal H}[c]=\int_\Omega h(c)dx$ 
(now $h(c)$ is interpreted as a function of all $c_1,\ldots,c_{N+1}$)
formally with respect to time, a computation (made rigorous in Lemma \ref{lem.ent}) 
shows the entropy-dissipation inequality
\begin{equation}\label{ent.ineq}
  \frac{d{\mathcal H}}{dt} + K\int_\Omega\sum_{i=1}^{N+1}|\na\sqrt{c_i}|^2 dx \le 0,
\end{equation}
where $K>0$ is a constant which depends only on $(D_{ij})$. This estimate
yields $H^1$ bounds for $\sqrt{c_i}$.

The existence proof is based on the construction of a problem which approximates
\eqref{eq.B}. We replace the time derivative by an implicit Euler discretization
with time step $\tau>0$ and we add the fourth-order operator 
$\eps(\Delta^2 w+w)$, which guarantees the uniform coercivity of the elliptic
system in $\V$ with respect to $w$. The existence of approximating weak 
solutions is shown by means of the Leray-Schauder fixed-point theorem. 
The discrete analogon of the above entropy-dissipation estimate implies a priori
bounds uniform in the approximation parameters $\tau$ and $\eps$, which allows us
to pass to the limit $(\tau,\eps)\to 0$. In particular, the entropy inequality
provides global solutions.

System \eqref{eq1}-\eqref{bic} with vanishing production rates, 
$r=(r_1,\ldots,r_{N+1})^\top=0$, admits the
homogeneous steady state $(\bar c_1^0,\ldots,\bar c_{N+1}^0)$, where
$\bar c_i^0=\mbox{meas}(\Omega)^{-1}\int_\Omega c_i^0 dx$. We are able to prove
that the solution, constructed in Theorem \ref{thm.ex}, converges exponentially
fast to this stationary state. For this, we introduce the relative entropy
$$
  {\mathcal H}^*[c] = \sum_{i=1}^{N+1}\int_\Omega c_i\log\frac{c_i}{\bar c_i^0}dx.
$$

\begin{theorem}[Exponential decay]\label{thm.long}
Let the assumptions of Theorem \ref{thm.ex} hold.
We suppose that $r=0$ and $\min_{i=1,\ldots,N+1}\|c_i^0\|_{L^1(\Omega)}>0$.
Let $(c_1,\ldots,c_{N+1})$ be the weak solution constructed in Theorem \ref{thm.ex}
and define $c^0=(c_1^0,\ldots,c_{N+1}^0)^\top$.
Then there exist constants $C>0$, depending only on $\Omega$, and
$\lambda>0$, depending only on $\Omega$ and $(D_{ij})$, such that
$$
  \|c_i(\cdot,t)-\bar c_i^0\|_{L^1(\Omega)} 
  \le Ce^{-\lambda t}\sqrt{{\mathcal H}^*[c^0]}, \quad i=1,\ldots,N+1.
$$
\end{theorem}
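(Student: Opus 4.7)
The strategy is the standard entropy method: first establish exponential decay of the relative entropy $\mathcal H^*[c(t)]$, and then transfer this decay to $L^1$ via the Csisz\'ar--Kullback--Pinsker inequality.

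Since $r=0$ and the boundary conditions in \eqref{bic} are homogeneous Neumann, integrating \eqref{eq1} yields that $\int_\Omega c_i(\cdot,t)\,dx$ is conserved for each $i$, hence $\bar c_i(t)=\bar c_i^0$ for all $t\ge 0$. Consequently
$$
\mathcal H^*[c]-\mathcal H[c]=\sum_{i=1}^{N+1}(1-\log\bar c_i^0)\int_\Omega c_i\,dx
$$
is constant in time, so the entropy-dissipation inequality \eqref{ent.ineq} (established rigorously on the approximating scheme used in Theorem~\ref{thm.ex} and passed to the limit) upgrades to
$$
\frac{d}{dt}\mathcal H^*[c(t)] + K\int_\Omega\sum_{i=1}^{N+1}|\na\sqrt{c_i}|^2\,dx \le 0.
$$
Next I would invoke the Gross-type logarithmic Sobolev inequality on $\Omega$ equipped with the normalized Lebesgue measure, applied to $\sqrt{c_i}$: there exists $C_{\mathrm{LSI}}=C_{\mathrm{LSI}}(\Omega)>0$ such that
$$
\int_\Omega c_i\log(c_i/\bar c_i)\,dx\le 2C_{\mathrm{LSI}}\int_\Omega|\na\sqrt{c_i}|^2\,dx,\qquad i=1,\ldots,N+1.
$$
Summing over $i$, using $\bar c_i=\bar c_i^0$, and combining with the previous display produces the differential inequality
$$
\frac{d}{dt}\mathcal H^*[c(t)]\le -\frac{K}{2C_{\mathrm{LSI}}}\,\mathcal H^*[c(t)],
$$
and Gronwall's lemma yields $\mathcal H^*[c(t)]\le e^{-\widetilde\lambda t}\mathcal H^*[c^0]$ with $\widetilde\lambda=K/(2C_{\mathrm{LSI}})$ depending only on $\Omega$ and $(D_{ij})$.

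Finally, I would convert this entropy decay into the stated $L^1$ decay by applying the Csisz\'ar--Kullback--Pinsker inequality componentwise to the probability densities $c_i/\|c_i^0\|_{L^1}$ and $1/|\Omega|=\bar c_i^0/\|c_i^0\|_{L^1}$:
$$
\|c_i(\cdot,t)-\bar c_i^0\|_{L^1(\Omega)}^2\le 2\|c_i^0\|_{L^1(\Omega)}\int_\Omega c_i\log(c_i/\bar c_i^0)\,dx\le 2|\Omega|\,\mathcal H^*[c(t)],
$$
using $\|c_i^0\|_{L^1}\le|\Omega|$, a consequence of $c_i^0\le 1$. This gives the theorem with $C=\sqrt{2|\Omega|}$ and $\lambda=\widetilde\lambda/2$. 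The positivity assumption $\min_i\|c_i^0\|_{L^1}>0$ enters only to guarantee $\bar c_i^0>0$ so that $\mathcal H^*[c^0]$ is finite; it does not affect the decay rate. The one delicate point of the plan is the log-Sobolev step: it is essential to use a form of the inequality whose constant depends on $\Omega$ alone, since a Poincar\'e-based substitute via the pointwise estimate $u\log(u/\bar u)-(u-\bar u)\le C(M)(\sqrt u-\sqrt{\bar u})^2$ on $[0,M]$ would produce a rate also depending on a lower bound for $\bar c_i^0$, contrary to what the statement asserts.
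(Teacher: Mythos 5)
Your high-level strategy coincides with the paper's: decay of the relative entropy via the entropy-dissipation inequality \eqref{ent.ineq} plus a logarithmic Sobolev inequality with constant depending only on $\Omega$, then Csisz\'ar--Kullback componentwise. The last two steps are carried out correctly (and your remark that a Poincar\'e-based substitute would spoil the constant is exactly right). The genuine gap is the step you dispose of in a parenthesis: ``established rigorously on the approximating scheme and passed to the limit.'' That is where essentially all of the work lies, and as written your plan does not go through. The weak solution of Theorem \ref{thm.ex} only satisfies $c_i\ge 0$, so on the set $\{c_i=0\}$ the function $\log(c_i/\bar c_i^0)$ is not an admissible test function and the chain rule $\frac{d}{dt}{\mathcal H}^*[c]=\sum_i\langle \pa_t c_i,\log(c_i/\bar c_i^0)\rangle$ is not justified at the level of the limit solution; the differential inequality must therefore be derived for the approximation \eqref{w1}, where $c_i(w^k)>0$ by \eqref{inv.w}.

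But at that level your key structural observation fails: mass is \emph{not} conserved. Testing \eqref{w1} with $v=e_i$ gives $\int_\Omega c_i^k\,dx=\int_\Omega c_i^0\,dx-\eps\tau\sum_{j\le k}\int_\Omega w_i^j\,dx$, so $\bar c_i^k\ne\bar c_i^0$ and the quantity ${\mathcal H}^*[c^k]-{\mathcal H}[c^k]$ is not constant along the scheme; consequently \eqref{ent.ineq} does not transfer to ${\mathcal H}^*$ for free. The paper's proof handles this by first proving Lemma \ref{lem.L1} (the $L^1$ norms stay within a factor $1\pm\gamma$ of their initial values for $\eps$ small depending on $\gamma$), then testing with $w^k-\bar w^k$, which produces error terms $C_\gamma$ and an $\eps\int|\bar w^k|^2$ term in the discrete relative-entropy inequality, and finally taking a coupled limit $\gamma\to 0$, $C_\gamma/\tau\to 0$, $\eps\to 0$ to recover $ {\mathcal H}^*[c(\cdot,t)]\le e^{-C_2t}{\mathcal H}^*[c^0]$. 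None of this bookkeeping appears in your proposal, and the paper explicitly identifies the lack of discrete mass conservation as the main difficulty of the theorem. To repair your argument you would need either this discrete machinery or an independent justification of the chain rule and of the admissibility of $\log c_i$ as a test function for the limit solution, which the available regularity ($c_i\in L^2_{\rm loc}(0,\infty;H^1)$, $c_i\ge0$ only) does not provide.
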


The proof of this result is based on the entropy-dissipation inequality
\eqref{ent.ineq} and the logarithmic Sobolev inequality \cite{Gro75}, which links the
entropy dissipation to the relative entropy, as well as on the Csisz\'ar-Kullback
inequality \cite{UAMT00}, which bounds the squared $L^1$ norm in terms of
the relative entropy. The difficulty of the proof is that the approximate 
solutions do not conserve the $L^1$-norm because of the presence of the
regularizing $\eps$-terms, and we need to derive appropriate bounds. 

The paper is organized as follows. In Section \ref{sec.matrix}, we prove some
properties of the diffusion matrices $A(c)$ and $A_0(c)$
and we show how system \eqref{eq1}-\eqref{eq2} of $N+1$ equations 
can be reduced to a system of $N$ equations. Based on these properties, 
Theorems \ref{thm.ex} and \ref{thm.long} are proved in Sections \ref{sec.ex}
and \ref{sec.long}, respectively. For the convenience of the reader, 
the derivation of the Maxwell-Stefan relations
\eqref{eq2} is sketched in Appendix \ref{sec.deriv} and some definitions and 
results from matrix theory needed in Section \ref{sec.matrix}
are summarized in Appendix \ref{sec.app}.


\section{Properties of the diffusion matrices}\label{sec.matrix}

Let the Maxwell-Stefan diffusion matrix $(D_{ij})\in\R^{(N+1)\times(N+1)}$ 
($N\ge 2$) be symmetric with $D_{ij}>0$ for $i\neq j$ and $D_{ii}=0$ for all $i$
and set $d_{ij}=1/D_{ij}$ for $i\ne j$.
Let $c=(c_i)\in\R^{N+1}$ be a strictly positive vector satisfying 
$\sum_{i=1}^{N+1}c_i=1$. 
We refer to Appendix \ref{sec.app} for the definitions and 
results from matrix analysis used in this section. 
According to \eqref{eq2} and \eqref{eq2a},
the matrix $A=A(c)=(a_{ij})\in\R^{(N+1)\times(N+1)}$ is given by
$$
  a_{ij} = d_{ij}c_i\quad\mbox{for }i,j=1,\ldots,N+1,\ i\neq j, \quad
  a_{ii} = -\sum_{j=1,\,j\neq i}^{N+1} d_{ij}c_j \quad\mbox{for }i=1,\ldots,N+1.
$$
In \cite[Section 7.7.1]{Gio99}, the matrix with elements $-a_{ij}c_j$ is
analyzed and it is shown that it is symmetric, positive semi-definite,
irreducible, and a singular M-matrix as well as that a generalized inverse can be
defined. Our approach is to apply the Perron-Frobenius theory to $A$, 
following \cite{Bot11}.

\begin{lemma}[Properties of $A$]\label{lem.a}
Let $\delta=\min_{i,j=1,\ldots,N+1,\,i\neq j}d_{ij}>0$ and 
$\Delta=2\sum_{i,j=1,\,i\neq j}^{N+1}d_{ij}$. Then the spectrum $\sigma(-A)$ of
$-A$ satisfies
$$
  \sigma(-A) \subset \{0\}\cup [\delta,\Delta).
$$
\end{lemma}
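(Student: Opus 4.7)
The plan is to diagonally symmetrize $A$, extract a sum-of-squares representation of the quadratic form, and close off the upper bound by Gershgorin.

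With $C = \mathrm{diag}(c_1, \ldots, c_{N+1})$ (invertible since $c$ is strictly positive), a quick check gives $(AC)_{ij} = d_{ij}c_ic_j$ for $i \neq j$, which is symmetric in $(i,j)$; hence $AC$ is symmetric and so is $\tilde A := C^{-1/2}(AC)C^{-1/2} = C^{-1/2}AC^{1/2}$. Thus $\sigma(A) = \sigma(\tilde A) \subset \R$. A direct rearrangement yields
\[
-x^\top \tilde A x = \tfrac{1}{2}\sum_{i \neq j} d_{ij}\bigl(\sqrt{c_j}\,x_i - \sqrt{c_i}\,x_j\bigr)^2 \geq 0,
\]
so $-A$ has nonnegative real spectrum. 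The kernel of $-\tilde A$ consists of those $x$ for which $x_i/\sqrt{c_i}$ is independent of $i$, so it is one-dimensional (spanned by $(\sqrt{c_1}, \ldots, \sqrt{c_{N+1}})^\top$) and $0$ is a simple eigenvalue of $-A$.

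For the lower bound $\delta$, I would restrict to the orthogonal complement of the kernel, i.e., $\sum_i \sqrt{c_i}\,x_i = 0$. Expanding the sum of squares and using $\sum_i c_i = 1$ yields the clean identity
\[
\sum_{i \neq j}\bigl(\sqrt{c_j}\,x_i - \sqrt{c_i}\,x_j\bigr)^2 = 2|x|^2 - 2\Big(\textstyle\sum_i \sqrt{c_i}\,x_i\Big)^2 = 2|x|^2,
\]
so, since $d_{ij} \geq \delta$, $-x^\top \tilde A x \geq \delta|x|^2$ on $\ker^\perp$, and every nonzero eigenvalue of $-A$ is at least $\delta$.

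For the strict upper bound, Gershgorin applied to the columns of $A$ gives, for each eigenvalue $\lambda$, some $j$ with $|\lambda - a_{jj}| \leq \sum_{i \neq j}|a_{ij}| = -a_{jj}$; since $\lambda$ is real, $-\lambda \leq -2a_{jj} = 2\sum_{i \neq j}d_{ij}c_i \leq 2\sum_{i \neq j}d_{ij}$. Because $N + 1 \geq 3$ and all $d_{ik}$ are strictly positive, the $j$-th partial sum is strictly less than the total $\Delta/2$, and hence $-\lambda < \Delta$. The step I expect to require most care is the lower bound: without the $c$-independent identity $\sum_{i \neq j}(\sqrt{c_j}x_i - \sqrt{c_i}x_j)^2 = 2|x|^2$ on $\ker^\perp$, one would only obtain a spectral gap depending on $\min_i c_i$, which would degenerate as the mixture becomes nearly pure in some species.
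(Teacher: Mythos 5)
Your proof is correct, and it reaches the lower bound $\delta$ by a genuinely different and more elementary route than the paper. Both arguments begin with the same symmetrization $C^{-1/2}AC^{1/2}$ (the paper calls it $A_S$; note your name $\tilde A$ collides with the paper's later notation for $A|_{\operatorname{im}(A)}$). From there the paper invokes the Perron--Frobenius theorem twice — once on $A$ to identify $s(A)=0$ as a simple eigenvalue, and once on the rank-one perturbation $A_S-\alpha\sqrt{c}\otimes\sqrt{c}$ — and then combines this with Weyl's eigenvalue inequality to push the nonzero spectrum below $-\alpha$ for every $\alpha<\delta$. You instead exhibit the quadratic form as the sum of squares $\tfrac12\sum_{i\neq j}d_{ij}(\sqrt{c_j}x_i-\sqrt{c_i}x_j)^2$ and verify the identity $\sum_{i\neq j}(\sqrt{c_j}x_i-\sqrt{c_i}x_j)^2=2|x|^2-2(\sqrt{c}\cdot x)^2$ (which indeed uses $\sum_i c_i=1$ and checks out), so that Courant--Fischer on $\{\sqrt{c}\}^\perp$ gives the gap $\delta$ directly; the simplicity of the zero eigenvalue also falls out of the sum of squares without any Perron--Frobenius input. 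This is self-contained, avoids the appendix machinery entirely, and — as you note — makes transparent why the spectral gap is uniform in $c$ rather than degenerating with $\min_i c_i$; the paper's route has the advantage of following Bothe's framework, which applies to quasi-positive matrices beyond the symmetrizable case. For the upper bound you use Gershgorin column disks where the paper uses the Frobenius-norm bound on the spectral radius; both yield the strict inequality $-\lambda<\Delta$ (yours needs the observation that the $j$-th column sum omits strictly positive terms of $\Delta$ when $N+1\ge 3$, which you correctly supply).
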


The inclusion $\sigma(-A)\subset \{0\}\cup[\delta,\infty)$ is shown in
\cite[Section 5]{Bot11}. For the convenience of the reader and since
some less known results from matrix analysis are needed, we present a full proof.

\begin{proof}
The matrix $A$ is quasi-positive and irreducible. Therefore, by Theorem \ref{thm.PF}
of Perron-Frobenius (see Appendix \ref{sec.app}), 
the spectral bound of $A$, $s(A)=\max\{\Re(\lambda):\lambda\in
\sigma(A)\}$, is a simple eigenvalue of $A$ associated 
with a strictly positive eigenvector
and $s(A)>\Re(\lambda)$ for all $\lambda\in\sigma(A)$, $\lambda\neq s(A)$.
Here, $\Re(z)$ denotes the real part of the complex number $z$. Thus,
$$
  \sigma(A)\subset \{s(A)\}\cup \{z\in\C:\Re(z)<s(A)\}.
$$
An elementary computation shows that $c$ is a (strictly) positive eigenvector
to the eigenvalue $\lambda=0$ of $A$. According to the Perron-Frobenius theory,
only the eigenvector to $s(A)$ is positive. This implies that $s(A)=0$ and
$$
  \sigma(A)\subset \{0\}\cup \{z\in\C:\Re(z)<0\}.
$$

We can describe the spectrum of $\sigma(A)$ in more detail.  
Let $C^{1/2}=\mbox{diag}(\sqrt{c_1},\ldots,\sqrt{c_{N+1}})$ be a diagonal matrix
in $\R^{(N+1)\times(N+1)}$ with inverse $C^{-1/2}$. Then we can introduce the
symmetric matrix $A_S = C^{-1/2}AC^{1/2}$ whose elements are given by
$$
  a^S_{ij} = \left\{\begin{array}{cl}
  a_{ii} &\quad\mbox{if }i=1,\ldots,N+1, \\
  d_{ij}\sqrt{c_i c_j} &\quad\mbox{if }i,j=1,\ldots,N+1,\ i\neq j.
  \end{array}\right.
$$
The matrix $A_S$ is real and symmetric since $d_{ij}=d_{ji}$
and therefore, it has only real eigenvalues.
Since $A$ and $A_S$ are similar, their spectra coincide:
$$
  \sigma(A_S) = \sigma(A) \subset \{0\}\cup\{z\in\R:z<0\} = (-\infty,0].
$$

Now, consider the matrix $A_S(\alpha)=A_S-\alpha\sqrt{c}\otimes\sqrt{c}$,
where $0<\alpha<\delta$ and $\sqrt{c}=(\sqrt{c_1},\ldots,\sqrt{c_{N+1}})^\top$.
Then $A_S(\alpha)$ is quasi-positive and irreducible (since $\alpha<\delta\le d_{ij}$).
Using $\sum_{i=1}^{N+1}c_i=1$, a computation shows that $-\alpha$ is an eigenvalue 
of $A_S(\alpha)$ associated to the strictly positive eigenvector $\sqrt{c}$. 
By Theorem \ref{thm.PF} of Perron-Frobenius, the spectral bound
of $A_S(\alpha)$ equals $-\alpha$ and
$$
  \sigma(A_S(\alpha))\subset (-\infty,-\alpha].
$$
Since $A_S(\alpha)$ and $\alpha\sqrt{c}\otimes\sqrt{c}$ are symmetric,
we can apply Theorem \ref{thm.weyl} of Weyl:
$$
  \lambda_i(A_S) = \lambda_i\big(\alpha\sqrt{c}\otimes\sqrt{c} + A_S(\alpha)\big)
  \le \lambda_i\big(\alpha\sqrt{c}\otimes\sqrt{c}\big) + \lambda_{N+1}(A_S(\alpha)), 
$$
where $i=1,\ldots,N+1$ and
the eigenvalues $\lambda_i(\cdot)$ are arranged in increasing order.
Because of $\lambda_{N+1}(A_S(\alpha))=-\alpha$ and
$\lambda_i(\alpha\sqrt{c}\otimes\sqrt{c})=0$ for $i=1,\ldots,N$,
$\lambda_{N+1}(\alpha\sqrt{c}\otimes\sqrt{c})$ $=\alpha$
(see Proposition~\ref{prop.xy}), we find that $\lambda_i(A_S)\le -\alpha$ for $i=1,\ldots,N$ 
and $\lambda_{N+1}(A_S)\le 0$. Thus, for all $\alpha<\delta$,
$$
  \sigma(A) = \sigma(A_S) \subset \{0\}\cup(-\infty,-\alpha],
$$
implying that $\sigma(-A)\subset\{0\}\cup[\delta,\infty)$. 

It remains to prove the upper bound of the spectrum. Denoting by $\|\cdot\|_F$
the Frobenius norm, we find for the spectral radius of $-A$ that
\begin{align*}
  r(-A) &\le \|-A\|_F = \left(\sum_{i,j=1}^{N+1}a_{ij}^2\right)^{1/2}
  = \left(\sum_{i=1}^{N+1}\left(\sum_{j=1,\,j\neq i}^{N+1}d_{ij} c_j\right)^2
  + \sum_{i,j=1,\,j\neq i}^{N+1}(d_{ij}c_i)^2\right)^{1/2} \\
  &< 2\sum_{i,j=1,\,j\neq i}^{N+1}d_{ij} = \Delta,
\end{align*}
since $0<c_i<1$, finishing the proof.
\end{proof}

\begin{lemma}[Properties of restrictions of $A$ and $A_S$]\label{lem.tildea}
Let $\tilde A=A|_{\text{\rm im}(A)}$ and $\tilde A_S=A_S|_{\text{\rm im}(A_S)}$.
Then $\tilde A$ and $\tilde A_S$ are invertible on the images $\text{\rm im}(A)$ and
$\text{\rm im}(A_S)$, respectively, and
\begin{equation}\label{prop.b}
  \sigma(-\tilde A),\,\sigma(-\tilde A_S)\subset [\delta,\Delta), \quad
  \sigma((-\tilde A_S)^{-1})\subset (1/\Delta,1/\delta].
\end{equation}
\end{lemma}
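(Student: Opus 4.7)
My plan is to deduce Lemma \ref{lem.tildea} directly from Lemma \ref{lem.a} by exploiting the symmetric representative $A_S = C^{-1/2} A C^{1/2}$ and the fact that the zero eigenvalue of $A$ (and of $A_S$) is simple.

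First, I would handle $\tilde A_S$. Because $d_{ij}=d_{ji}$, the matrix $A_S$ is real symmetric, hence orthogonally diagonalizable; in particular $\R^{N+1}=\ker(A_S)\oplus\text{im}(A_S)$ with the two subspaces orthogonal. By Perron-Frobenius (applied as in Lemma \ref{lem.a}), $0$ is a simple eigenvalue of $A_S$ with eigenvector $\sqrt{c}$, so $\ker(A_S)=\text{span}(\sqrt{c})$ and $\text{im}(A_S)$ is an $A_S$-invariant subspace of dimension $N$. The restriction $\tilde A_S$ is therefore a linear self-map of $\text{im}(A_S)$ whose kernel is $\ker(A_S)\cap\text{im}(A_S)=\{0\}$, so $\tilde A_S$ is bijective. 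Its spectrum is $\sigma(A_S)\setminus\{0\}$, and Lemma \ref{lem.a} then yields $\sigma(-\tilde A_S)\subset[\delta,\Delta)$. Taking reciprocals of the (real, nonzero) eigenvalues of $-\tilde A_S$ gives $\sigma((-\tilde A_S)^{-1})\subset(1/\Delta,1/\delta]$.

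Next I would transport these facts to $A$ via the similarity $A=C^{1/2}A_S C^{-1/2}$. Since $C^{1/2}$ is invertible, we have $\ker(A)=C^{1/2}\ker(A_S)$ and $\text{im}(A)=C^{1/2}\text{im}(A_S)$, so
\begin{equation*}
  \R^{N+1} = C^{1/2}\bigl(\ker(A_S)\oplus\text{im}(A_S)\bigr) = \ker(A)\oplus\text{im}(A).
\end{equation*}
In particular, $\text{im}(A)$ is $A$-invariant and $\ker(A)\cap\text{im}(A)=\{0\}$, so $\tilde A$ is a bijection of $\text{im}(A)$. Moreover $C^{1/2}$ intertwines $\tilde A_S$ with $\tilde A$, hence $\sigma(\tilde A)=\sigma(\tilde A_S)$, which gives $\sigma(-\tilde A)\subset[\delta,\Delta)$.

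I do not foresee any serious obstacle, since the heavy lifting (the spectrum of $A$ and $A_S$, and the simplicity of the zero eigenvalue) is already contained in Lemma \ref{lem.a}. The only subtle point is the invertibility of $\tilde A$ on $\text{im}(A)$, where a priori $\text{im}(A)$ and $\ker(A)$ need not be complementary for non-symmetric matrices; this is circumvented cleanly by passing through the symmetric $A_S$ and using that similarities preserve the kernel/image decomposition. The spectral statements then follow by invariance of the spectrum under similarity and by taking reciprocals.
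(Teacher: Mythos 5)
Your proof is correct and follows the same overall architecture as the paper's: establish the direct-sum decomposition $\R^{N+1}=\ker\oplus\mathrm{im}$ for both matrices, conclude that the restrictions have trivial kernel and are therefore bijective endomorphisms, and read off the spectral inclusions from Lemma \ref{lem.a}. The one step you handle differently is the decomposition $\R^{N+1}=\mathrm{im}(A)\oplus\ker(A)$ for the non-symmetric $A$: the paper obtains it by observing that $0$ is a semisimple eigenvalue of $A$ (it is simple by Perron--Frobenius) and invoking the criterion of Theorem \ref{thm.semi}, whereas you push the orthogonal decomposition of the symmetric $A_S$ forward through the invertible map $C^{1/2}$, using $\ker(A)=C^{1/2}\ker(A_S)$ and $\mathrm{im}(A)=C^{1/2}\mathrm{im}(A_S)$. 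Your route is slightly more elementary in that it bypasses the semisimplicity characterization entirely, at the cost of letting the similarity $A=C^{1/2}A_SC^{-1/2}$ carry all the weight; both arguments are fully rigorous. Your intertwining argument giving $\sigma(\tilde A)=\sigma(\tilde A_S)$ is also sound, though for the stated inclusion it suffices (as the paper does) to combine $\sigma(\tilde A)\subset\sigma(A)$ with the fact that $0\notin\sigma(\tilde A)$.
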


\begin{proof}
Direct inspection shows that $\mbox{ker}(A)=\mbox{span}\{c\}$,
$\mbox{im}(A)=\{\mathbf{1}\}^\perp$, where $\mathbf{1}=(1,\ldots,$ $1)^\top
\in\R^{N+1}$, and $\mbox{ker}(A_S)=\mbox{span}\{\sqrt{c}\}$. 
By the symmetry of $A_S$, it follows that
\begin{equation}\label{sum.as}
  \R^{N+1}=\mbox{ker}(A_S)^\perp\oplus\mbox{ker}(A_S)
  =\mbox{im}(A_S^\top)\oplus\mbox{ker}(A_S)=\mbox{im}(A_S)\oplus\mbox{ker}(A_S).
\end{equation}
Furthermore, using Theorem \ref{thm.semi}, since $\lambda=0$ is a semisimple 
eigenvalue of $A$,
\begin{equation}\label{sum.a}
  \R^{N+1}=\mbox{im}(A)\oplus\mbox{ker}(A).
\end{equation}

We observe that both $\tilde A$ and $\tilde A_S$ are endomorphisms. 
Clearly, $\sigma(\tilde A)\subset\sigma(A)$ and $\sigma(\tilde A_S)\subset
\sigma(A_S)$. We claim that $0$ is not contained in $\sigma(\tilde A)$ or
$\sigma(\tilde A_S)$.
Indeed, otherwise there exists $x\in\text{im}(A)$
(or $x\in\text{im}(A_S)$), $x\neq 0$, such that $\tilde Ax=0$
(or $\tilde A_S x=0$). But this implies that $x\in\mbox{ker}(A)$ 
(or $x\in\mbox{ker}(A_S)$) and because of \eqref{sum.a} (or \eqref{sum.as}),
it follows that $x=0$, contradiction. Hence, $\tilde A$ and $\tilde A_S$ are
invertible on their respective domain, and \eqref{prop.b} follows.
\end{proof}

The above lemma shows that the flux-gradient relation 
\eqref{eq2a} can be inverted since $\sum_{i=1}^{N+1}J_i$ $=0$ implies that 
each column of $J$ is an element of 
$\{\mathbf{1}\}^\perp=\mbox{im}(A)$. In fact, we can write \eqref{eq2a}
as $\na c=\tilde A J$ and hence, $J=\tilde A^{-1}\na c$. Therefore, we can formulate
\eqref{eq1} and \eqref{eq2} as
\begin{equation}\label{eq.c}
  \pa_t c - \diver(-\tilde A^{-1}\na c) = r(c)\quad\mbox{in }\Omega,\ t>0.
\end{equation}

The next step is to reduce the Maxwell-Stefan system of $N+1$ components to
a system of $N$ components only. Still, we assume that $c_i>0$ for all $i$
and $\sum_{i=1}^{N+1}c_i=1$. We define the matrices
$$
  X = I_{N+1} - \begin{pmatrix} 0 \\ \vdots \\ 0 \\ 1 \end{pmatrix}
  \otimes \begin{pmatrix} 1 \\ \vdots \\ 1 \\ 0 \end{pmatrix}, \quad
  X^{-1} = I_{N+1} + \begin{pmatrix} 0 \\ \vdots \\ 0 \\ 1 \end{pmatrix}
  \otimes \begin{pmatrix} 1 \\ \vdots \\ 1 \\ 0 \end{pmatrix}
  \in\R^{(N+1)\times(N+1)},
$$
where $X^{-1}$ is the inverse of $X$ and $I_{N+1}$ is the unit matrix of
$\R^{(N+1)\times(N+1)}$.
A computation shows that 
$$
  X^{-1}AX = \begin{pmatrix} -A_0 & b \\ 0 & 0 \end{pmatrix},
$$
where the $(N\times N)$-matrix $A_0=(a_{ij}^0)$ is defined by
\begin{equation}\label{def.a0}
  a^0_{ij} = \left\{\begin{array}{cl}
  -(d_{ij}-d_{i,N+1})c_i &\quad\mbox{if }i\neq j,\ i,j=1,\ldots,N, \\
  \sum_{j=1,\,j\neq i}^N (d_{ij}-d_{i,N+1})c_j + d_{i,N+1}
  &\quad\mbox{if }i=j=1,\ldots,N,
  \end{array}\right.
\end{equation}
and the vector $b=(b_i)$ is given by $b_i=d_{i,N+1}c_i$, $i=1,\ldots,N$.
In Lemma \ref{lem.a0} below we show that $A_0$ is invertible.
Then, writing $c'=(c_1,\ldots,c_N)^\top$ and $J'=(J_1,\ldots,J_N)^\top$,
$$
  \begin{pmatrix} \na c' \\ 0 \end{pmatrix}
  = X^{-1}\na c = (X^{-1}AX)X^{-1} J 
  = \begin{pmatrix} -A_0 J' \\ 0 \end{pmatrix}.
$$
Thus, applying $X^{-1}$ to \eqref{eq.c}, since $X^{-1}\pa_t c = (\pa_t c',0)^\top$ 
and $X^{-1}r(c)=(r'(c),0)^\top$,
\begin{equation}\label{eq.cp}
  \pa_t c' - \diver(A_0^{-1}\na c') = r'\quad\mbox{in }\Omega,\ t>0.
\end{equation}
We note that every solution $c'$ to this problem defines a solution to
\eqref{eq.c} and hence to \eqref{eq1}-\eqref{eq2} by multiplying \eqref{eq.cp}
(augmented via $(c',0)^\top$) by $X$ and setting $c_{N+1}=1-\sum_{i=1}^N c_i$.

\begin{lemma}[Properties of $A_0$]\label{lem.a0}
The matrix $A_0\in\R^{N\times N}$, defined in \eqref{def.a0}, is invertible
with spectrum
$$
  \sigma(A_0)\subset [\delta,\Delta).
$$
Furthermore, the elements of its inverse $A_0^{-1}$ are uniformly bounded in 
$c_1,\ldots,c_N\in[0,1]$.
\end{lemma}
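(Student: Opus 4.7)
The plan is to read off $\sigma(A_0)$ from $\sigma(A)$ by exploiting the block-triangular similarity displayed just before the statement, and then to upgrade the resulting spectral bound to a uniform bound on the entries of $A_0^{-1}$ by means of Cramer's rule and a compactness argument. Since
\[
  X^{-1}AX = \begin{pmatrix} -A_0 & b \\ 0 & 0 \end{pmatrix},
\]
the characteristic polynomial of $X^{-1}AX$ factors as $\det(-A_0-\lambda I_N)\cdot(-\lambda)$. Because $X^{-1}AX$ is similar to $A$, counting algebraic multiplicities we obtain
\[
  \mathrm{mult}_{A}(\lambda) = \mathrm{mult}_{-A_0}(\lambda) + \delta_{\lambda,0}
  \quad\text{for every } \lambda\in\C.
\]

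First, I would invoke Lemma \ref{lem.a} together with the Perron--Frobenius fact already used in its proof, namely that $0$ is a \emph{simple} eigenvalue of $A$ with eigenvector $c$. The displayed multiplicity identity at $\lambda=0$ then reads $1=\mathrm{mult}_{-A_0}(0)+1$, so $0\notin\sigma(-A_0)$ and $A_0$ is invertible. For $\lambda\neq 0$, any $\lambda\in\sigma(-A_0)$ lies in $\sigma(A)\setminus\{0\}$, and from Lemma \ref{lem.a} (which identifies $\sigma(A)=\sigma(A_S)\subset\R$) these eigenvalues are real and satisfy $\lambda\in(-\Delta,-\delta]$. This gives $\sigma(A_0)\subset[\delta,\Delta)$, as claimed, under the standing assumption that $c_i>0$ for all $i$ and $\sum_{i=1}^{N+1}c_i=1$.

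To obtain the uniform bound on $A_0^{-1}$, I would argue as follows. Since all eigenvalues of $A_0$ are real and lie in $[\delta,\Delta)$, one has $\det A_0=\prod_{i=1}^N\lambda_i(A_0)\ge\delta^N>0$ on the open simplex $\{c_i>0,\ \sum_{i=1}^N c_i<1\}$. The entries of $A_0$ given by \eqref{def.a0} are affine functions of $c_1,\dots,c_N$, so $\det A_0$ is a polynomial in $c$ and in particular continuous. Therefore the lower bound $\det A_0\ge\delta^N$ extends by continuity to the closed simplex. Then Cramer's rule $(A_0^{-1})_{ij}=(-1)^{i+j}\det M_{ji}/\det A_0$ expresses every entry of $A_0^{-1}$ as a polynomial in $c$ (hence bounded on the compact set) divided by a quantity bounded below by $\delta^N$, yielding the desired uniform bound.

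The one delicate point I expect is the bookkeeping that turns simplicity of the Perron eigenvalue of $A$ into the precise statement that $0$ does not reappear in $\sigma(-A_0)$; everything else is either a transparent consequence of Lemma \ref{lem.a} or a standard continuity/compactness argument. The remaining minor issue is that the spectral characterization itself is only proved for strictly positive $c_i$ (so that $A$ is irreducible and Perron--Frobenius applies), whereas the uniform bound is asserted up to the boundary of the simplex; this gap is closed precisely by the polynomial continuity of $\det A_0$, which does not require the eigenvalue analysis at the boundary.
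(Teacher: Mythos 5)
Your proof is correct and follows essentially the same route as the paper: the block-triangular similarity gives $\sigma(A_0)\cup\{0\}=\sigma(-A)$, simplicity of the zero eigenvalue of $A$ removes $0$ from $\sigma(A_0)$, and Cramer's rule together with $\det A_0=\prod_i\lambda_i(A_0)\ge\delta^N$ yields the uniform bound on $A_0^{-1}$. The only differences are cosmetic: the paper bounds the entries of $\operatorname{adj}(A_0)$ explicitly by $(N-1)!\,K^{N-1}$ with $K$ a bound on the entries of $A_0$ where you invoke polynomial continuity and compactness, and you are more explicit than the paper about extending the determinant bound by continuity from the open simplex to its closure.
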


\begin{proof}
Since the blockwise upper triangular matrix $-X^{-1}AX$ is similar to $-A$,
their spectra coincide and
\begin{equation}\label{sigma.a0}
  \sigma(A_0)\cup\{0\} = \sigma(-X^{-1}AX) = \sigma(-A) 
  \subset \{0\}\cup[\delta,\Delta).
\end{equation}
Observing that $0$ is a simple eigenvalue of $-A$, it follows that
$\sigma(A_0)\subset[\delta,\Delta)$ and hence, $A_0$ is invertible.

It remains to show the uniform bound for the elements $\alpha_{ij}$ of $A_0^{-1}$.
By Cramer's rule, $A_0^{-1}=\mbox{adj}(A_0)/\det A_0$, where $\mbox{adj}(A_0)$
is the adjugate of $A_0$. The definition of $A_0$ in \eqref{def.a0} implies that
$$
  |a^0_{ij}| \le \sum_{k=1,\,k\neq i}^N|d_{ik}-d_{i,N+1}| + |d_{i,N+1}| = K_i\le K, 
  \quad i,j=1,\ldots,N,
$$
where $K=\max_{i=1,\ldots,N}K_i$.
Therefore, the elements of $\mbox{adj}(A_0)$ are not larger than $(N-1)!K^{N-1}$.
By \eqref{sigma.a0}, the eigenvalues of $A_0$ are bounded from below by $\delta$.
Consequently, since the determinant of a matrix equals the product of its eigenvalues, 
$\det(A_0)\ge\delta^N$. This shows that 
$|\alpha_{ij}|\le (N-1)!K^{N-1}\delta^{-N}$ for all $i$, $j$.
\end{proof}

Consider the Hessian $\na^2 h$ of the entropy density \eqref{def.h} in the
variables $c_1,\ldots,c_N$. Then $H=(h_{ij})=\na^2 h\in\R^{N\times N}$ is given by
\begin{equation*}
  h_{ij} = \frac{1}{c_{N+1}} + \frac{\delta_{ij}}{c_i}, \quad
  i,j=1,\ldots,N,
\end{equation*}
where $\delta_{ij}$ denotes the Kronecker delta.
The matrix $H$ is symmetric and positive definite by Sylvester's criterion,
since all principle minors $\det H_k$ of $H$ are positive:
$$
  \det H_k = (c_1\cdots c_k c_{N+1})^{-1}\left(\sum_{i=1}^k c_i + c_{N+1}\right) > 0,
  \quad k=1,\ldots,N.
$$

\begin{lemma}[Properties of $B$]\label{lem.b}
The matrix $B=A_0^{-1}H^{-1}$ is symmetric and positive definite.
Furthermore, the elements of $B$ are bounded uniformly in $c_1,\ldots,c_{N+1}\in[0,1]$.
\end{lemma}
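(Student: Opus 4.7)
The key observation is that $B = A_0^{-1}H^{-1} = (HA_0)^{-1}$, and $H$ is already known to be symmetric and positive definite. Consequently, the symmetry and positive definiteness of $B$ will follow at once from the same two properties for $HA_0$, after which the uniform bound on the entries of $B$ will fall out from explicit formulas. I expect the main obstacle to be the symmetry of $HA_0$: although $A_0$ has a real, positive spectrum by Lemma \ref{lem.a0}, it is neither symmetric nor of an obviously self-adjoint type, so the symmetry must be teased out of the specific structure of $A_0$ and $H$.

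I would prove this by a direct entry-wise computation. Using $H_{ik} = c_{N+1}^{-1} + \delta_{ik}/c_i$ together with \eqref{def.a0}, the off-diagonal entries take the form
\[
(HA_0)_{ij} = \frac{1}{c_{N+1}}\sum_{k=1}^{N} (A_0)_{kj} + \frac{1}{c_i}(A_0)_{ij}, \qquad i \neq j.
\]
The column sum $\sum_k (A_0)_{kj}$ should simplify, using $d_{kj}=d_{jk}$ and the identity $\sum_{l\neq j,\,l\le N}c_l = 1 - c_j - c_{N+1}$, to a quantity depending only on $j$ and on the weighted total $\sum_{l\le N}d_{l,N+1}c_l$. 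Substituting back, the only $i$-dependence remaining in $(HA_0)_{ij}$ is $-d_{ij}+d_{i,N+1}$; both pieces are symmetric under $i\leftrightarrow j$ by $d_{ij}=d_{ji}$, so $(HA_0)_{ij} = (HA_0)_{ji}$. Diagonal entries pose no symmetry question, and this completes the verification that $HA_0$ is symmetric.

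Positive definiteness is then automatic via similarity. Setting $M := H^{1/2} A_0 H^{-1/2} = H^{-1/2}(HA_0)H^{-1/2}$, the symmetry of $HA_0$ just proved makes $M$ symmetric, while similarity gives $\sigma(M) = \sigma(A_0) \subset [\delta,\Delta)$ by Lemma \ref{lem.a0}. A real-symmetric matrix with strictly positive spectrum is positive definite, so Sylvester's law of inertia applied to the congruence $HA_0 = H^{1/2} M H^{1/2}$ shows that $HA_0$, and hence $B = (HA_0)^{-1}$, is symmetric and positive definite. For the uniform bound, I would compute $H^{-1}$ explicitly: writing $H = \operatorname{diag}(1/c_1,\ldots,1/c_N) + c_{N+1}^{-1}\mathbf{1}\mathbf{1}^\top$ and applying the Sherman-Morrison formula (with $\sum_{i=1}^N c_i = 1-c_{N+1}$) should yield $(H^{-1})_{ij} = c_i\delta_{ij} - c_i c_j$, whose entries are bounded by $1$ uniformly in $c_1,\ldots,c_{N+1}\in[0,1]$. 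Combined with the uniform bound on the entries of $A_0^{-1}$ from Lemma \ref{lem.a0}, this gives the claimed uniform bound on the entries of $B$.
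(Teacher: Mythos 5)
Your proof is correct and follows essentially the same route as the paper: symmetry of $B^{-1}=HA_0$ by direct entry-wise computation using $d_{ij}=d_{ji}$ and $\sum_i c_i=1$, positive definiteness from the positivity of $\sigma(A_0)$ combined with the positive definiteness of $H$ (your similarity-plus-Sylvester argument is just an inlined proof of the paper's Theorem~\ref{thm.hpd}), and the uniform bound from the explicit formula $(H^{-1})_{ij}=c_i\delta_{ij}-c_ic_j$ together with Lemma~\ref{lem.a0}. All the computations you sketch (the column sum of $A_0$, the Sherman--Morrison inversion of $H$) check out against the paper's explicit formulas.
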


\begin{proof}
Using $d_{ij}=d_{ji}$ and $\sum_{i=1}^{N+1}c_i=1$, a calculation
shows that the elements $\beta_{ij}$ of $B^{-1}=HA_0$ equal
\begin{align*}
  \beta_{ii} &= d_{i,N+1}\left(1-\sum_{k=1,\,k\neq i}^N c_k\right)
  \left(\frac{1}{c_i}+\frac{1}{c_{N+1}}\right)
  + \sum_{k=1,\,k\neq i}^N\left(\frac{d_{k,N+1}}{c_{N+1}}+\frac{d_{ik}}{c_i}\right)c_k,
  \\
  \beta_{ij} &= \frac{d_{i,N+1}}{c_{N+1}}\left(1-\sum_{k=1,\,k\neq i}^N c_k\right)
  + \frac{d_{j,N+1}}{c_{N+1}}\left(1-\sum_{k=1,\,k\neq j}^N c_k\right)
  + \sum_{k=1,\,k\neq i,j}^N d_{k,N+1}\frac{c_k}{c_{N+1}} - d_{ij},
\end{align*}
where $i,j=1,\ldots,N$ and $i\neq j$.
Hence, $B^{-1}$ is symmetric. We have proved
above that $H^{-1}$ is symmetric and positive definite. 
According to Theorem \ref{thm.hpd}, the number of positive eigenvalues of 
$A_0=H^{-1}B^{-1}$ equals that for $B^{-1}$. However, by
\eqref{sigma.a0}, $A_0$ has only positive eigenvalues. Therefore, also $B^{-1}$ 
has only positive eigenvalues. This shows that $B^{-1}$ and consequently
$B$ are symmetric and positive definite.

It remains to show the uniform boundedness of $B$. The inverse $H^{-1}=(\eta_{ij})$
can be computed explicitly:
$$
  \eta_{ij} = \left\{\begin{array}{cl}
  (1-c_i)c_i &\quad\mbox{if }i=j=1,\ldots,N, \\
  -c_i c_j &\quad\mbox{if }i\neq j,\ i,j=1,\ldots,N.
  \end{array}\right.
$$
Denoting the elements of $A_0^{-1}$ by $\alpha_{ij}$, the elements
$b_{ij}$ of $B$ equal
\begin{align*}
  b_{ii} &=   \alpha_{ii}(1-c_i)c_i - \sum_{k=1,\,k\neq i}^N \alpha_{ik}c_i c_k, 
  \quad i=1,\ldots,N, \\[-4mm]
  b_{ij} &= -\alpha_{ii}c_ic_j + \alpha_{ij}(1-c_j)c_j
  - \sum_{k=1,\,k\neq i,j}^N\alpha_{ik}c_j c_k, \quad i\neq j,\ i,j=1,\ldots,N.
\end{align*}
By Lemma \ref{lem.a0}, the elements $\alpha_{ij}$ are uniformly bounded.
Then, since $c_i\in[0,1]$, the uniform bound for $b_{ij}$ follows.
\end{proof}


\section{Proof of Theorem \ref{thm.ex}}\label{sec.ex}

The analysis in Section \ref{sec.matrix} shows that 
the Maxwell-Stefan system can be reduced to the problem
\begin{align}
  & \pa_t c' - \diver(B(w)\na w) = r'(c)\quad\mbox{in }\Omega,\ t>0, \label{req1} \\
  & \na w_i\cdot\nu = 0\quad\mbox{on }\pa\Omega, \quad w_i(\cdot,0)=w_i^0 \quad
  \mbox{in }\Omega, \quad i=1,\ldots,N, \label{req2}
\end{align}
where $B=B(w)$ is symmetric and positive definite for $c_1,\ldots,c_{N+1}>0$
and $c_i=c_i(w)$ is given by \eqref{inv.w}. Furthermore, any solution $c'=c'(w)$
to this problem defines formally a solution to the original problem 
\eqref{eq1}-\eqref{bic} by setting $c_{N+1}=1-\sum_{i=1}^N c_i$.
We assume that there exists $0<\eta<1$ such that $c_i^0\ge\eta$ for $i=1,\ldots,N$ and
$c_{N+1}^0=1-\sum_{i=1}^N c_i^0\ge \eta$. Then 
$w_i^0=\log (c_i^0/c_{N+1}^0)$ satisfies $w_i^0\in L^\infty(\Omega)$, $i=1,\ldots,N$.

{\em Step 1: Existence of an approximate system.}
Let $T>0$, $m\in\N$ and set $\tau=T/m$, $t_k=\tau k$ for $k=0,\ldots,m$. 
We prove the existence of weak solutions to the approximate system
\begin{align}
  \frac{1}{\tau}\int_\Omega & \big(c'(w^k)-c'(w^{k-1})\big)\cdot vdx
  + \int_\Omega \na v: B(w^k)\na w^k dx \nonumber \\
  &{}+ \eps\int_\Omega(\Delta w^k\cdot\Delta v + w^k\cdot v)dx
  = \int_\Omega r'(c(w^k))\cdot v dx, \quad v\in \V^N, \label{w1}
\end{align}
where $w^k$ approximates $w(\cdot,t_k)$ and $\eps>0$.
The notation ``:'' signifies summation over both matrix indices; in particular,
$$
  \int_\Omega \na v: B(w^k)\na w^k dx
  = \sum_{i,j=1}^N\int_\Omega b_{ij}(w^k)\na v_i\cdot\na w_j^k dx,
$$
and we recall that $\V=\{u\in H^2(\Omega):\na u\cdot\nu=0$ on $\pa\Omega\}$.
The implicit Euler discretization of the time derivative makes the system elliptic
which avoids problems related to the regularity in time. The additional $\eps$-term
guarantees the coercivity of the elliptic system.

\begin{lemma}\label{lem.ex.approx}
Let the assumptions of Theorem \ref{thm.ex} hold and let 
$w^{k-1}\in L^\infty(\Omega)^N$. Then there exists a weak
solution $w^k\in \V^N$ to \eqref{w1}.
\end{lemma}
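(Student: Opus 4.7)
The plan is to construct $w^k$ via the Leray-Schauder fixed-point theorem, using that the $\eps$-regularization makes the linearized elliptic problem coercive on $\V^N$, while the entropy variable change supplies the uniform a priori bound.

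I would introduce the fixed-point map $F:L^\infty(\Omega)^N\times[0,1]\to L^\infty(\Omega)^N$ as follows. For $(y,\sigma)$ fixed, let $w=F(y,\sigma)\in\V^N$ be the unique weak solution of the linear problem
\begin{equation*}
\int_\Omega \na v:B(y)\na w\,dx + \eps\int_\Omega(\Delta w\cdot\Delta v + w\cdot v)\,dx
=\sigma\int_\Omega\Bigl[r'(c(y)) - \tau^{-1}\bigl(c'(y)-c'(w^{k-1})\bigr)\Bigr]\cdot v\,dx
\end{equation*}
for all $v\in\V^N$. Existence and uniqueness of $w$ follow from the Lax-Milgram lemma: the bilinear form is bounded on $\V^N\subset H^2(\Omega)^N$ by Lemma \ref{lem.b}, and coercive thanks to the $\eps$-term combined with the $H^2$-regularity estimate $\|w\|_{H^2}\le C(\|\Delta w\|_{L^2}+\|w\|_{L^2})$ for Neumann data, valid since $\pa\Omega\in C^{1,1}$. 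The right-hand side is a continuous linear functional on $\V^N$ because $c'(y)$, $c'(w^{k-1})$ and $r'(c(y))$ all take values in the compact set $[0,1]^{N+1}$ via \eqref{inv.w}. The compact Sobolev embedding $H^2(\Omega)\hookrightarrow L^\infty(\Omega)$ (for $d\le 3$) makes $F$ compact as a map into $L^\infty(\Omega)^N$; continuity of $F$ in $y$ follows by passing to the limit in the linear problem combined with Lax-Milgram uniqueness. Obviously $F(\cdot,0)\equiv 0$.

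The crux is a uniform bound $\|w\|_{L^\infty(\Omega)}\le C$ for any fixed point $w=F(w,\sigma)$, independent of $\sigma\in[0,1]$. Testing with $v=w$ and discarding the nonnegative term $\int_\Omega \na w:B(w)\na w\,dx\ge 0$ (Lemma \ref{lem.b}) yields
\begin{equation*}
\eps\bigl(\|\Delta w\|_{L^2}^2+\|w\|_{L^2}^2\bigr)
\le \sigma\int_\Omega r'(c(w))\cdot w\,dx
 - \frac{\sigma}{\tau}\int_\Omega\bigl(c'(w)-c'(w^{k-1})\bigr)\cdot w\,dx.
\end{equation*}
The essential identity is $w=\na h(c'(w))$ from \eqref{def.w}, so the convexity of the entropy $h$ (whose Hessian $H$ is positive definite, as shown in Section \ref{sec.matrix}) yields the discrete entropy inequality $(c'(w)-c'(w^{k-1}))\cdot w\ge h(c'(w))-h(c'(w^{k-1}))$. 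Since $h$ and $r'$ are bounded on the simplex $\{c_i\ge 0,\ \sum_i c_i\le 1\}$, Young's inequality absorbs the resulting $\|w\|_{L^2}$ term into $\eps\|w\|_{L^2}^2$, and combined with the $H^2$-regularity estimate we obtain $\|w\|_{H^2}\le C(\eps,\tau,\Omega,N)$. Sobolev embedding for $d\le 3$ then gives the required $L^\infty$ bound, and Leray-Schauder produces the fixed point $w^k=F(w^k,1)$, which is the desired weak solution of \eqref{w1}. The main obstacle is precisely this entropy-convexity step; without the change to entropy variables the test function $v=w$ would not yield a manageable bound on the discrete time derivative, and the positive definiteness of $B$ alone would not be strong enough to close the a priori estimate.
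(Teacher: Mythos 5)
Your proposal is correct and follows essentially the same route as the paper: Leray--Schauder with a Lax--Milgram-solvable linearization, coercivity supplied by the $\eps$-regularization together with $H^2$ elliptic regularity, compactness via $H^2(\Omega)\hookrightarrow L^\infty(\Omega)$ for $d\le 3$, and the uniform-in-$\sigma$ bound obtained by testing with $v=w$ and invoking the convexity of $h$. The only (harmless) deviation is the reaction term: the paper uses the sign condition \eqref{ass.r} to discard $\sigma\int_\Omega r'(c(w))\cdot w\,dx\le 0$, whereas you bound it by Young's inequality using only the boundedness of $r'$ on the simplex --- which suffices here since the bound is allowed to depend on $\eps$ and $\tau$, and is consistent with the paper's own remark \eqref{gen.r} under the weaker hypothesis \eqref{ass.r2}.
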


\begin{proof}
The idea of the proof is to apply the Leray-Schauder fixed-point theorem.
Let $\bar w\in L^\infty(\Omega)^N$ and $\sigma\in[0,1]$. 
We wish to find $w\in\V^N$ such that
\begin{equation}\label{LM}
  a(w,v) = F(v) \quad\mbox{for all }v\in \V^N,
\end{equation}
where
\begin{align*}
  a(w,v) &= \int_\Omega \na v: B(\bar w)\na w dx
  + \eps\int_\Omega(\Delta w\cdot\Delta v + w\cdot v)dx, \\
  F(v) &= -\frac{\sigma}{\tau}\int_\Omega\big(c'(\bar w)-c'(w^{k-1})\big)\cdot v dx
  + \sigma\int_\Omega r'(c(\bar w))\cdot v dx.
\end{align*}
Since $B(\bar w)$ is positive definite, by Lemma \ref{lem.b}, 
the bilinear form $a$ is coercive,
$$
  a(w,w) = \int_\Omega\sum_{i,j=1}^N  b_{ij}(\bar w)\na w_i\cdot\na w_j dx
  + \eps\int_\Omega\big(|\Delta w|^2+|w|^2\big)dx
  \ge C\eps\|w\|_{H^2(\Omega)^N}^2,
$$
where $C>0$ is a constant. The inequality follows from elliptic regularity,
using the assumption $\pa\Omega\in C^{1,1}$.
By Lemma \ref{lem.b} again, the elements of $B(\bar w)$ are bounded uniformly in $c$,
and thus, $a$ is continuous in $\V^N\times \V^N$.
Using $0<c_i(\bar w)$, $c_i(w^{k-1})<1$ and the continuity of $r_i$, we can show that
$F$ is bounded in $\V^N$. Then the Lax-Milgram lemma provides the
existence of a unique solution $w\in \V^N$ to \eqref{LM}.
Since the space dimension is assumed to be at most three, the embedding
$H^2(\Omega)\hookrightarrow L^\infty(\Omega)$ is continuous (and compact) 
such that $w\in L^\infty(\Omega)^N$.
This shows that the fixed-point operator $S:L^\infty(\Omega)^N\times[0,1]
\to L^\infty(\Omega)^N$, $S(\bar w,\sigma)=w$, is well-defined.
By construction, $S(\bar w,0)=0$ for all $\bar w\in
L^\infty(\Omega)^N$. Standard arguments show that $S$ is continuous and compact.
It remains to prove a uniform bound for all fixed points of $S(\cdot,\sigma)$
in $L^\infty(\Omega)^N$. 

Let $w\in L^\infty(\Omega)^N$ be such a fixed point. Then $w$ solves \eqref{LM}
with $\bar w$ replaced by $w$. Taking the test function $v=w\in\V^N$, it follows that
\begin{align}
  \frac{\sigma}{\tau}\int_\Omega & \big(c'(w)-c'(w^{k-1})\big)\cdot w dx
  + \int_\Omega\big(\na w: B(w)\na w + \eps(|\Delta w|^2 + |w|^2)\big)dx 
  \nonumber \\
  &{}= \sigma\int_\Omega r'(c(w))\cdot w dx. \label{aux1}
\end{align}

In order to estimate the first term on the left-hand side, we consider the
entropy density $h$, defined in \eqref{def.h}. Its Hessian is 
positive definite if $c_1,\ldots,c_{N+1}>0$ and hence, $h$ is convex, i.e.
$$
  h(c) - h(\hat c) \le \na h(c)\cdot(c-\hat c)
  \quad\mbox{for all }c,\hat c\in\R^N\mbox{ with }
  0<c_i,\hat c_i,\sum_{j=1}^Nc_j,\sum_{j=1}^N\hat c_j<1.
$$
Using $w=\na h(c')$, we find that
$$
  \frac{\sigma}{\tau}\int_\Omega\big(c'(w)-c'(w^{k-1})\big)\cdot w dx
  \ge \frac{\sigma}{\tau}\int_\Omega\big(h(c'(w))-h(c'(w^{k-1}))\big)dx.
$$
By Lemma \ref{lem.b}, $B$ is positive definite:
$$
   \int_\Omega \na w: B(w)\na w dx \ge 0.
$$
Finally, using the assumptions $\sum_{i=1}^{N}r_i(c)=-r_{N+1}(c)$ and 
$\sum_{i=1}^{N+1}r_i(c)\log c_i\le 0$, 
\begin{align*}
  \int_\Omega r'(c(w))\cdot w dx 
  &= \int_\Omega\left(\sum_{i=1}^N r_i(c(w))(\log c_i(w)-\log c_{N+1}(w))\right)dx \\
  &= \int_\Omega\sum_{i=1}^{N+1}r_i(c(w))\log c_i(w) dx \le 0.
\end{align*}
Therefore, \eqref{aux1} becomes
$$
  \sigma\int_\Omega h(c'(w))dx + \eps\tau\int_\Omega\big(|\Delta w|^2+|w|^2\big)dx
  \le \sigma\int_\Omega h(c'(w^{k-1}))dx.
$$
This yields an $H^2$ bound uniform in $w$ and $\sigma$ (but depending on $\eps$
and $\tau$).
The embedding $H^2(\Omega)\hookrightarrow L^\infty(\Omega)$ implies the desired
uniform bound in $L^\infty(\Omega)$, and the Leray-Schauder theorem gives a solution
to \eqref{w1}.
\end{proof}

Note that we obtain the uniform bounds also under the weaker condition
\eqref{ass.r2}. In this case, \eqref{aux1} can be estimated as
\begin{equation}\label{gen.r}
  \sigma\int_\Omega h(c'(w))dx + \eps\tau\int_\Omega\big(|\Delta w|^2+|w|^2\big)dx
  \le \sigma\int_\Omega h(c'(w^{k-1}))dx + \sigma C_r \tau\mbox{meas}(\Omega).
\end{equation}

{\em Step 2: Entropy dissipation.}
Since the diffusion matrix $B(w^k)$ defines a self-adjoint endomorphism, the
entropy-dissipation estimate
$$
  \int_\Omega \na w^k:B(w^k)\na w^k dx \ge \int_\Omega \lambda|\na w^k|^2 dx
$$
holds, where $\lambda$ is the smallest eigenvalue of $B(w^k)$. Unfortunately,
$\lambda$ depends on $c(w^k)$ and we do not have a 
positive lower bound independent of $w^k$.
However, we are able to prove an entropy-dissipation inequality in the variables
$\sqrt{c_i(w^k)}$ with a uniform positive lower bound. In the following,
we employ the notation $f(c^k)=(f(c_1^k),\ldots,f(c_{N+1}^k))^\top$ for
arbitrary functions $f$.

\begin{lemma}\label{lem.ed}
Let $w^k\in \V^N$ be a weak solution to \eqref{w1}. Then
$$
  \int_\Omega \na w^k:B(w^k)\na w^k dx \ge \frac{4}{\Delta}\int_\Omega 
  |\na\sqrt{c^k}|^2 dx,
$$
where $c^k=c(w^k)=(c_1(w^k),\ldots,c_{N+1}(w^k))^\top$ is defined in \eqref{inv.w}
and $c_{N+1}(w^k)=1-\sum_{i=1}^N c_i(w^k)$.
\end{lemma}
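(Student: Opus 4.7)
My strategy is to rewrite the quadratic form $\nabla w^k:B(w^k)\nabla w^k$ as the symmetric Maxwell-Stefan entropy dissipation, and then invert the flux-gradient relation \eqref{eq2} via Cauchy-Schwarz to recover the $|\nabla\sqrt{c_i^k}|^2$ on the right-hand side.

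\emph{Rewriting the dissipation.} Since the entropy variable satisfies $w=\nabla h(c')$, I would use the chain rule $\nabla w^k=H(c^k)\nabla c'^k$ (with $H=\nabla^2 h$), so that $B(w^k)\nabla w^k=A_0(c^k)^{-1}\nabla c'^k$. Defining the fluxes $J_i^k$ through the Maxwell-Stefan relations \eqref{eq2}, namely $J'^k=-A_0(c^k)^{-1}\nabla c'^k$ and $J_{N+1}^k=-\sum_{i=1}^N J_i^k$, one has $B(w^k)\nabla w^k=-J'^k$. Exploiting $w_i^k=\log(c_i^k/c_{N+1}^k)$ and $\sum_{i=1}^{N+1}J_i^k=0$, a brief calculation gives
\begin{equation*}
\nabla w^k:B(w^k)\nabla w^k=-\sum_{i=1}^{N+1}\frac{J_i^k\cdot\nabla c_i^k}{c_i^k}.
\end{equation*}
Substituting \eqref{eq2} and symmetrizing via $D_{ij}=D_{ji}$ would then turn this into the familiar symmetric Maxwell-Stefan dissipation
\begin{equation*}
\nabla w^k:B(w^k)\nabla w^k=\sum_{1\le i<j\le N+1}\frac{|c_j^k J_i^k-c_i^k J_j^k|^2}{D_{ij}\,c_i^k c_j^k}.
\end{equation*}

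\emph{From fluxes back to gradients.} To extract the $|\nabla\sqrt{c_i^k}|^2$, I would apply Cauchy-Schwarz with weights $\alpha_{ij}=c_i^k c_j^k/D_{ij}$ to the Maxwell-Stefan representation $\nabla c_i^k=-\sum_{j\ne i}(c_j^k J_i^k-c_i^k J_j^k)/D_{ij}$, obtaining
\begin{equation*}
\frac{|\nabla c_i^k|^2}{c_i^k}\le\Big(\sum_{j\ne i}c_j^k d_{ij}\Big)\sum_{j\ne i}\frac{|c_j^k J_i^k-c_i^k J_j^k|^2}{D_{ij}\,c_i^k c_j^k}.
\end{equation*}
Summing over $i=1,\dots,N+1$ and using $c_j^k\le 1$ together with the trivial bound $\max_i\sum_{j\ne i}d_{ij}\le\sum_{i,j:\,i\ne j}d_{ij}=\Delta/2$, one arrives at
\begin{equation*}
\sum_{i=1}^{N+1}\frac{|\nabla c_i^k|^2}{c_i^k}\le\Delta\;\nabla w^k:B(w^k)\nabla w^k.
\end{equation*}
The identity $|\nabla\sqrt{c_i^k}|^2=|\nabla c_i^k|^2/(4c_i^k)$, combined with an integration over $\Omega$, then delivers the claimed lower bound with the constant $4/\Delta$.

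\emph{The main difficulty} is the symmetrization producing the Maxwell-Stefan dissipation: the cross terms $J_i^k\cdot J_j^k/D_{ij}$ must combine with the diagonal contributions $c_j^k|J_i^k|^2/(c_i^k D_{ij})$ under the symmetry of $(D_{ij})$ to yield the perfect squares $|c_j^k J_i^k-c_i^k J_j^k|^2/(D_{ij}\,c_i^k c_j^k)$, and this uses in an essential way the mass-flux constraint $\sum_{i=1}^{N+1}J_i^k=0$ encoded in the reduction to the $N$-component system. All divisions are legitimate because $w^k\in H^2(\Omega)\hookrightarrow L^\infty(\Omega)$ (since $d\le 3$) forces $c_i^k>0$ through \eqref{inv.w}. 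Once the symmetric dissipation is in hand, the weight choice in Cauchy-Schwarz is essentially forced and the optimal constant $\Delta$ drops out.
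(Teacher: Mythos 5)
Your proof is correct and arrives at the same constant $4/\Delta$, but its second half follows a genuinely different route from the paper's. The first step coincides with the paper's: writing $B(w^k)\na w^k=A_0^{-1}\na c'^k=-J'^k$, appending $J_{N+1}^k=-\sum_{i=1}^NJ_i^k$, and using $w_i^k=\log(c_i^k/c_{N+1}^k)$ to obtain $\na w^k:B(w^k)\na w^k=-\sum_{i=1}^{N+1}J_i^k\cdot\na c_i^k/c_i^k$ is exactly the paper's identity $\int_\Omega\na w^k:B\na w^k\,dx=\int_\Omega\na\log c^k:(-\tilde A)^{-1}\na c^k\,dx$; one does need to verify, as the paper does, that the $(N+1)$-st relation of \eqref{eq2} is recovered from the first $N$, and this — rather than the symmetrization, which is a routine completion of squares using only $D_{ij}=D_{ji}$ — is where $\sum_{i=1}^{N+1}J_i^k=0$ enters essentially. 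From that point the paper conjugates by $C^{\pm1/2}=\operatorname{diag}((c_i^k)^{\pm1/2})$ and invokes the spectral inclusion $\sigma((-\tilde A_S)^{-1})\subset(1/\Delta,1/\delta]$ of Lemma \ref{lem.tildea}, i.e., the Perron--Frobenius/Weyl machinery of Section \ref{sec.matrix}; you instead exhibit the quadratic form as the pairwise dissipation $\sum_{i<j}|c_j^kJ_i^k-c_i^kJ_j^k|^2/(D_{ij}c_i^kc_j^k)$ and recover the gradients by a weighted Cauchy--Schwarz inequality applied directly to \eqref{eq2}. Your route is more elementary — no spectral theory is needed for this lemma beyond the invertibility of $A_0$ that defines $B$ — and it makes the origin of the constant transparent; indeed, estimating $\sum_{j\ne i}d_{ij}c_j^k\le\max_{j\ne i}d_{ij}$ via $\sum_jc_j^k=1$ would even sharpen it. What the paper's spectral argument buys is a two-sided control of $(-\tilde A_S)^{-1}$ and a formulation that avoids writing out the pairwise structure of the dissipation.
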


\begin{proof}
First, we claim that
$$
  \int_\Omega \na w^k:B(w^k)\na w^k dx
  = \int_\Omega \na\log c^k:(-\tilde A)^{-1}\na c^kdx,
$$
where $\tilde A$ is defined in Lemma \ref{lem.tildea}. 
To prove this identity we set $z'=(z_1,\ldots,z_N)^\top=B(w^k)\na w^k\in\R^{N\times d}$ 
and $z_{N+1}=-\sum_{i=1}^N z_i\in\R^d$. Then the definitions of $w^k$ and $z_{N+1}$
yield
\begin{align}
  \na w^k: B(w^k)\na w^k 
  &= \na w^k:z'
  = \sum_{i=1}^N\big(\na\log c_i^k-\na\log c_{N+1}^k\big)\cdot z_i \nonumber \\
  &= \sum_{i=1}^{N+1}\na\log c_i^k\cdot z_i = \na\log c^k:z, \label{z1}
\end{align}
where $z=(z',z_{N+1})^\top$. Using $\na w^k = H\na c'(w^k)$ and $B=A_0^{-1}H^{-1}$,
where $H=H(c'(w^k))$ is the Hessian of $h$, it follows that $z'=A_0^{-1}\na c'(w^k)$
or, equivalently, $\na c'(w^k)=A_0 z'$. A computation shows that for $i=1,\ldots,N$,
$$
  \na c_i^k = (A_0 z')_i = \sum_{j=1,\,j\neq i}^N(d_{ij}-d_{i,N+1})(z_i c_j^k-z_j c_i^k)
  + d_{i,N+1} z_i = (-Az)_i = (-\tilde Az)_i,
$$
since each column of $z$ is an element of $\mbox{im}(A)$. 
Because of $\tilde Az\in \text{im}(A)$, we have 
$(-\tilde Az)_{N+1} = -\sum_{i=1}^N(-\tilde Az)_i=\na c_{N+1}^k$. 
We infer that $\na c^k=-\tilde A z$ and 
consequently, $z=(-\tilde A)^{-1}\na c^k$. Inserting this into \eqref{z1}
proves the claim.

We recall from the proof of Lemma \ref{lem.tildea} that the images of
$\tilde A=A|_{\text{im}(A)}$ and $\tilde A_S=A_S|_{\text{im}(A_S)}$ are given by
$\mbox{im}(A)=\{\mathbf{1}\}^\perp$ and 
$\mbox{im}(A_S)=\mbox{span}\{\sqrt{c^k}\}^\perp = \{C^{-1/2}x:x\in \mbox{im}(A)\}$,
where $C^{\pm 1/2}=\mbox{diag}((c^k_1)^{\pm 1/2},\ldots,(c^k_{N+1})^{\pm 1/2})
\in\R^{(N+1)\times(N+1)}$.
Then the definition $-A=C^{1/2}(-A_S)C^{-1/2}$ implies that 
$-\tilde A=C^{1/2}(-\tilde A_S)C^{-1/2}$
and hence, $(-\tilde A_S)^{-1}=C^{-1/2}(-\tilde A)^{-1}C^{1/2}$. 
We infer that
\begin{align*}
  \na\log c^k:(-\tilde A)^{-1}\na c^k  
  &= 4(\na\sqrt{c^k}): C^{-1/2}(-\tilde A)^{-1}C^{1/2}\na\sqrt{c^k} \\
  &= 4\na\sqrt{c^k}:(-\tilde A_S)^{-1}\na\sqrt{c^k}
  \ge \frac{4}{\Delta}|\na\sqrt{c^k}|^2.
\end{align*}
The inequality follows from Lemma \ref{lem.tildea} since $(-\tilde A_S)^{-1}$ is a
self-adjoint endomorphism whose smallest eigenvalue is larger than $1/\Delta$.
\end{proof}

{\em Step 3: A priori estimates.}
Next, we derive some estimates uniform in $\tau$, $\eps$, and $\eta$ by means of the
entropy-dissipation inequality. 
The following lemma is a consequence of \eqref{aux1}, the proof of
Lemma \ref{lem.ex.approx}, and Lemma \ref{lem.ed}.

\begin{lemma}[Discrete entropy inequality]\label{lem.ent}
Let $w^k\in \V^N$ be a weak solution to \eqref{w1}. Then for $k\ge 1$,
$$
  {\mathcal H}[c^k] + \frac{4\tau}{\Delta}\int_\Omega|\na\sqrt{c^k}|^2 dx
  + \eps\tau\int_\Omega\big(|\Delta w^k|^2 + |w^k|^2\big)dx \le {\mathcal H}[c^{k-1}],
$$
where $c^k=c(w^k)$ and ${\mathcal H}[c^k]=\int_\Omega h(c^k)dx$. Solving this
estimate recursively, it follows that
$$
  {\mathcal H}[c^k] + \frac{4\tau}{\Delta}\sum_{j=1}^k\int_\Omega|\na\sqrt{c^j}|^2 dx
  + \eps\tau\sum_{j=1}^k\int_\Omega\big(|\Delta w^j|^2 + |w^j|^2\big)dx 
  \le {\mathcal H}[c^{0}].
$$
\end{lemma}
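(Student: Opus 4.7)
The plan is to test the weak formulation \eqref{w1} against $v = w^k \in \V^N$ itself and then process each of the four resulting terms using (i) convexity of the entropy density, (ii) Lemma \ref{lem.ed}, and (iii) the reaction-rate hypothesis \eqref{ass.r}. After substituting $v=w^k$ (and noting that the scaling $\sigma$ from the fixed-point argument is now absent), the identity to start from reads
\begin{align*}
  \frac{1}{\tau}\int_\Omega \big(c'(w^k) - c'(w^{k-1})\big)\cdot w^k\, dx
  &+ \int_\Omega \na w^k : B(w^k)\na w^k\, dx \\
  &+ \eps\int_\Omega\big(|\Delta w^k|^2+|w^k|^2\big)dx
  = \int_\Omega r'(c(w^k))\cdot w^k\, dx.
\end{align*}

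The first term is handled exactly as in the proof of Lemma \ref{lem.ex.approx}: since $w^k = \na h(c'(w^k))$ by \eqref{def.w} and $h$ is convex on the simplex $\{c_i>0,\ \sum_{i=1}^N c_i<1\}$ because its Hessian is positive definite there, the tangent-line inequality gives pointwise $(c'(w^k)-c'(w^{k-1}))\cdot w^k \ge h(c'(w^k)) - h(c'(w^{k-1}))$, so this term dominates $\tau^{-1}(\mathcal H[c^k]-\mathcal H[c^{k-1}])$. For the diffusion term I would apply Lemma \ref{lem.ed} verbatim to obtain the lower bound $(4/\Delta)\int_\Omega|\na\sqrt{c^k}|^2\,dx$, which is the point where the bound becomes independent of $w^k$. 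For the right-hand side, the algebraic rewriting used in the proof of Lemma \ref{lem.ex.approx}, namely $r'(c)\cdot w = \sum_{i=1}^N r_i(\log c_i - \log c_{N+1}) = \sum_{i=1}^{N+1} r_i \log c_i$ thanks to $\sum_{i=1}^{N+1} r_i = 0$, combined with the sign hypothesis in \eqref{ass.r}, shows that $\int_\Omega r'(c(w^k))\cdot w^k\, dx \le 0$ and hence this contribution may be discarded. Multiplying through by $\tau$ yields the one-step inequality.

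The iterated form is then an immediate telescoping: summing the single-step inequality over $j=1,\dots,k$ cancels the intermediate values $\mathcal H[c^j]$ and aggregates the dissipation and regularization terms into the claimed finite sums. I do not foresee a genuine obstacle; the only bookkeeping subtlety is that the one-step inequality presupposes $w^{k-1}\in L^\infty(\Omega)^N$ so that $c'(w^{k-1})$ is meaningful in the integrand, but this is precisely what Lemma \ref{lem.ex.approx} provides inductively (for $k=1$ from the assumed regularization of $c_i^0$ away from $0$ and $1$, which makes $w^0\in L^\infty(\Omega)^N$). Under the weaker hypothesis \eqref{ass.r2}, the same argument goes through with an extra term $C_r\tau\,\mathrm{meas}(\Omega)$ on the right-hand side, as in \eqref{gen.r}.
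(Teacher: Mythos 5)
Your proposal is correct and follows exactly the paper's route: the authors likewise obtain the lemma by taking $v=w^k$ in \eqref{w1} (i.e.\ identity \eqref{aux1} with $\sigma=1$), reusing the convexity estimate and the sign of the reaction term from the proof of Lemma \ref{lem.ex.approx}, invoking Lemma \ref{lem.ed} for the diffusion term, and telescoping. No gaps; your remark about the weaker hypothesis \eqref{ass.r2} matches \eqref{gen.r} as well.
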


Let $w^k\in \V^N$ be a weak solution to \eqref{w1} and set $c^k=c(w^k)$.
We define the piecewise-constant-in-time functions $w^{(\tau)}(x,t)=w^k(x)$
and $c^{(\tau)}(x,t)=(c^k_1,\ldots,c_N^k)^\top(x)$ for $x\in\Omega$, 
$t\in((k-1)\tau,k\tau]$, $k=1,\ldots,m$, $c^{(\tau)}(\cdot,0)=(c_1^0,\ldots,c_N^0)^\top$,
and we introduce the discrete time derivative $D_\tau c^{(\tau)} 
= (c^{(\tau)}-\sigma_\tau c^{(\tau)})/\tau$ with the shift operator
$(\sigma_\tau c^{(\tau)})(x,t)=c^{(\tau)}(x,t-\tau)$ for $x\in\Omega$, $t\in(\tau,T]$,
$(\sigma_\tau c^{(\tau)})(x,t)=c^0(x)$ for $x\in\Omega$, $t\in(0,\tau]$.
The functions $(c^{(\tau)},w^{(\tau)})$ solve the following equation in the
distributional sense:
\begin{equation}\label{c1}
  D_\tau c^{(\tau)} - \diver(A_0^{-1}(c^{(\tau)})\na c^{(\tau)})
  + \eps(\Delta^2 w^{(\tau)} + w^{(\tau)}) = r'(c^{(\tau)}),
  \quad t>0.
\end{equation}
Lemma \ref{lem.ent} implies the following a priori estimates.

\begin{lemma}\label{lem.est}
There exists a constant $C>0$ independent of $\eps$, $\tau$, and $\eta$ such that
\begin{align}
  \|\sqrt{c^{(\tau)}}\|_{L^2(0,T;H^1(\Omega))}
  + \sqrt{\eps}\|w^{(\tau)}\|_{L^2(0,T;H^2(\Omega))}
  &\le C, \label{est1} \\
  \|c^{(\tau)}\|_{L^2(0,T;H^1(\Omega))}
  + \|D_\tau c^{(\tau)}\|_{L^2(0,T;\V')} &\le C. \label{est2}
\end{align}
\end{lemma}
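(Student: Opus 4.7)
The plan is to read off all four bounds directly from the discrete entropy inequality of Lemma~\ref{lem.ent}, combined with the uniform-in-$c$ boundedness of $A_0^{-1}$ from Lemma~\ref{lem.a0} and two chain-rule identities; no new a priori argument is required.

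The preliminary observation is that ${\mathcal H}[c^0]$ is bounded \emph{independently of} $\eta$, because $x\mapsto x(\log x-1)$ extends continuously to $[0,1]$ and $c_i^0\in[0,1]$. Since also $h(c)\ge -(N+1)$ pointwise, the telescoped form of Lemma~\ref{lem.ent} yields a uniform bound on
$$
\tau\sum_{j=1}^m\int_\Omega|\na\sqrt{c^j}|^2\,dx + \eps\tau\sum_{j=1}^m\int_\Omega\bigl(|\Delta w^j|^2+|w^j|^2\bigr)dx.
$$
Together with the pointwise bound $\sqrt{c_i^k}\in[0,1]$ built into \eqref{inv.w}, this already delivers the $\sqrt{c^{(\tau)}}$ estimate in \eqref{est1}; the $\sqrt{\eps}$-weighted $H^2$ bound on $w^{(\tau)}$ then follows from the Neumann elliptic regularity estimate $\|w\|_{H^2(\Omega)}\le C(\|\Delta w\|_{L^2(\Omega)}+\|w\|_{L^2(\Omega)})$, available thanks to $\pa\Omega\in C^{1,1}$. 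The $H^1$ half of \eqref{est2} is then immediate via $\na c_i^{(\tau)}=2\sqrt{c_i^{(\tau)}}\,\na\sqrt{c_i^{(\tau)}}$ and $\sqrt{c_i^{(\tau)}}\le 1$.

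The only bound requiring work beyond Lemma~\ref{lem.ent} is the $L^2(0,T;\V')$ estimate on $D_\tau c^{(\tau)}$. I would test the distributional equation \eqref{c1} against an arbitrary $v\in\V^N$ and bound each of the three resulting terms. The diffusion contribution is dominated by $C\|c^{(\tau)}\|_{H^1}\|v\|_{H^1}$ through the uniform boundedness of $A_0^{-1}$. The reaction term is dominated by $\|r'\|_{L^\infty([0,1]^{N+1})}|\Omega|^{1/2}\|v\|_{L^2}$, since $r'\in C^0$ and $c^{(\tau)}$ is confined to $[0,1]^{N+1}$. The regularizing term is the delicate one: writing
$$
\eps\int_\Omega(\Delta v\cdot\Delta w^{(\tau)}+w^{(\tau)}\cdot v)\,dx = \sqrt{\eps}\cdot\bigl(\sqrt{\eps}\|w^{(\tau)}\|_{H^2}\bigr)\|v\|_{\V}
$$
exhibits it as $\sqrt{\eps}$ times a function of $t$ already in $L^2(0,T)$ by \eqref{est1}. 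Collecting, squaring, and integrating in time closes \eqref{est2}.

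The main (mild) obstacle, and the only spot where a naive estimate would fail in the $\eps\to 0$ limit, is precisely the factorization $\eps=\sqrt{\eps}\cdot\sqrt{\eps}$ in the regularizing term: both factors of $\sqrt{\eps}$ must be used, one to invoke the $H^2$ bound from \eqref{est1} and one to keep the overall constant bounded as $\eps\to 0$, so that no negative power of $\eps$ leaks into the final estimate.
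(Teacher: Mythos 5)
Your proposal is correct and follows essentially the same route as the paper: \eqref{est1} is read off from the telescoped entropy inequality of Lemma~\ref{lem.ent} (with ${\mathcal H}[c^0]$ bounded uniformly in $\eta$ and $h\ge -(N+1)$), the $H^1$ half of \eqref{est2} comes from $\na c_i^{(\tau)}=2\sqrt{c_i^{(\tau)}}\,\na\sqrt{c_i^{(\tau)}}$ with the $L^\infty$ bound, and the $\V'$ bound is obtained by testing \eqref{c1} and using the uniform boundedness of $A_0^{-1}$ and of $r'$. Your explicit remark that the regularizing term must be split as $\eps=\sqrt{\eps}\cdot\sqrt{\eps}$ is exactly what the paper does implicitly when it bounds $\eps C\|w^{(\tau)}\|_{L^2(0,T;H^2(\Omega))}\le C\sqrt{\eps}$ via \eqref{est1}.
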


In the following, $C>0$ denotes a generic constant independent of
$\eps$, $\tau$, and $\eta$.

\begin{proof}
Estimate \eqref{est1} is an immediate consequence of the entropy inequality
of Lemma \ref{lem.ent} and the $L^\infty$-bound for $c^{(\tau)}$. 
To prove \eqref{est2}, we employ the H\"older inequality:
\begin{align*}
  \|\na c_i^{(\tau)}\|_{L^2(0,T;L^2(\Omega))}^2
  &= 4\int_0^T\big\|\sqrt{c_i^{(\tau)}}\na\sqrt{c_i^{(\tau)}}
  \big\|_{L^2(\Omega)}^2 dt \\
  &\le 4\int_0^T\big\|\sqrt{c_i^{(\tau)}}\big\|_{L^\infty(\Omega)}^2
  \big\|\na\sqrt{c_i^{(\tau)}}\big\|_{L^2(\Omega)}^2 dt \\
  &\le 4\|c_i^{(\tau)}\|_{L^\infty(0,T;L^\infty(\Omega))}
  \big\|\na\sqrt{c_i^{(\tau)}}\big\|_{L^2(0,T;L^2(\Omega))}^2 \le C,
\end{align*}
using \eqref{est1} and the fact that $0<c_i^{(\tau)}<1$, $i=1,\ldots,N$.
Here and in the following, $C>0$ denotes a generic constant independent of
$\eps$, $\tau$, and $\eta$.
By \eqref{c1} and $L^2(\Omega)\hookrightarrow (H^1(\Omega))'$,
\begin{align*}
  \|D_\tau c^{(\tau)} \|_{L^2(0,T;\V')}
  &\le C\|A_0^{-1}\na c^{(\tau)}\|_{L^2(0,T;L^2(\Omega))} \\
  &\phantom{xx}{}+ \eps C\|w^{(\tau)}\|_{L^2(0,T;H^{2}(\Omega))}
  + C\|r'(c^{(\tau)})\|_{L^2(0,T;L^{2}(\Omega))} \\
  &\le C\|A_0^{-1}\|_{L^\infty(0,T;L^\infty(\Omega))}
  \|\na c^{(\tau)}\|_{L^2(0,T;L^{2}(\Omega))} \\
  &\phantom{xx}{}+ \eps C\|w^{(\tau)}\|_{L^2(0,T;H^{2}(\Omega))}
  + C\|r'(c^{(\tau)})\|_{L^2(0,T;L^{2}(\Omega))}.
\end{align*}
The proof of Lemma \ref{lem.a0} shows that the elements of $A_0^{-1}$ are
bounded by a constant which depends only on $N$ and $(D_{ij})$. 
Since $0<c^{(\tau)}<1$ and $r'$ is continuous, $(r'(c^{(\tau)}))$ is bounded
in $L^2(0,T;L^{2}(\Omega))$. Therefore, in view of \eqref{est1} and the bound
on $c^{(\tau)}$ in $L^2(0,T;H^1(\Omega))$,
$$
  \|D_\tau c^{(\tau)}\|_{L^2(0,T;\V')} \le C,
$$
finishing the proof.
\end{proof}

{\em Step 4: Limits $\eps\to 0$ and $\tau\to 0$.} We apply the compactness result
of \cite[Theorem 1]{DrJu12} to the family $(c^{(\tau)})$. Since the embedding
$H^{1}(\Omega)\hookrightarrow L^p(\Omega)$ is compact for $1<p<6$,
\eqref{est2} implies the existence of a subsequence, which is not relabeled,
such that, as $(\eps,\tau)\to 0$,
$$
  c^{(\tau)}\to c'=(c_1,\ldots,c_N) 
  \quad\mbox{strongly in }L^2(0,T;L^p(\Omega)), \quad 1<p<6.
$$
As a consequence, $c_i\ge 0$, $\sum_{i=1}^N c_i\le 1$, and 
$c_{N+1}=1-\sum_{i=1}^N c_i\ge 0$.
Because of the uniform $L^\infty$-bounds for $c_i^{(\tau)}$, this convergence
holds even in the space $L^q(\Omega\times(0,T))$ for all $1\le q<\infty$.
Furthermore, by \eqref{est1}-\eqref{est2}, up to subsequences,
\begin{align*}
  \na c^{(\tau)} \rightharpoonup \na c' &\quad\mbox{weakly in }
  L^{2}(0,T;L^{2}(\Omega)), \\
  D_\tau c^{(\tau)} \rightharpoonup \pa_t c' &\quad\mbox{weakly in }
  L^2(0,T;\V'), \\
  \eps w^{(\tau)} \to 0 &\quad\mbox{strongly in }L^2(0,T;H^2(\Omega)).
\end{align*}
Since the elements of $A_0^{-1}$ are bounded and $0<c^{(\tau)}<1$,
\begin{align*}
  A_0(c^{(\tau)})^{-1} \to A_0(c')^{-1} &\quad\mbox{strongly in }L^q(0,T;L^q(\Omega))
  \mbox{ for all }1\le q<\infty, \\
  r'(c^{(\tau)}) \to r'(c) &\quad\mbox{strongly in }L^2(0,T;L^2(\Omega)),
\end{align*}
setting $c=(c_1,\ldots,c_{N+1})^\top$.
The above convergence results are sufficient to pass to the limit
$(\eps,\tau)\to 0$ in the weak formulation of \eqref{c1}, showing that
$c$ satisfies
$$
  \pa_t c' - \diver(A_0(c')^{-1}\na c') = r'(c)\quad\mbox{in }
  L^{2}(0,T;\V').
$$
This proves the existence of a weak solution to \eqref{eq.A0} and \eqref{bic}
with initial data satisfying $c_i^0\ge\eta>0$ and $\sum_{i=1}^N c_i^0\le 1-\eta$.
In view of the uniform bounds and the finiteness of the initial entropy,
we can perform the limit $\eta\to 0$ to obtain the existence result for
general initial data with $c_i^0\ge 0$ and $\sum_{i=1}^N c_i^0\le 1$.
This proves Theorem \ref{thm.ex}.


\section{Proof of Theorem \ref{thm.long}}\label{sec.long}

First, we prove that, if the production rates vanish, the $L^1$ norms of
the semi-discrete molar concentrations are bounded. We assume that there
exists $0<\eta<1$ such that $c_i^0\ge\eta$ for $i=1,\ldots,N+1$.

\begin{lemma}[Bounded $L^1$ norms]\label{lem.L1}
Let $r=0$. Then there exists a constant $\gamma_0>0$, only depending on 
$c^0$, such that for all $0<\gamma\le\min\{1,\gamma_0\}$ 
and sufficiently small $\eps>0$,
depending on $\gamma$, the semi-discrete concentrations $c^k=c(w^k)$,
where $w^k\in \V^N$ solves \eqref{w1}, satisfy
\begin{align}
  & (1-\gamma)\|c_i^0\|_{L^1(\Omega)} \le \|c_i^k\|_{L^1(\Omega)} 
  \le (1+\gamma)\|c_i^0\|_{L^1(\Omega)}, \quad i=1,\ldots,N, \quad
  k\in\N, \label{mass1} \\
  & \|c_{N+1}^0\|_{L^1(\Omega)} - \gamma\sum_{i=1}^N\|c_i^0\|_{L^1(\Omega)} 
  \le \|c_{N+1}^k\|_{L^1(\Omega)}
  \le \|c_{N+1}^0\|_{L^1(\Omega)} + \gamma\sum_{i=1}^N\|c_i^0\|_{L^1(\Omega)}.
  \label{mass2}
\end{align}
Furthermore, $\|c_{N+1}^k\|_{L^1(\Omega)}\ge \frac12\|c_{N+1}^0\|_{L^1(\Omega)}>0$.
\end{lemma}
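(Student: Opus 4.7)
The starting point is to test the weak formulation \eqref{w1} with the constant test function $v = e_i \in \V^N$, the $i$-th standard basis vector of $\R^N$, for each $i = 1, \ldots, N$. Since $\na v \equiv 0$ and $\Delta v \equiv 0$, both the Maxwell--Stefan diffusion term and the biharmonic regularization drop out, and the hypothesis $r = 0$ eliminates the right-hand side. What survives is the discrete mass balance
$$
  \int_\Omega c_i^k \, dx - \int_\Omega c_i^{k-1} \, dx = -\eps \tau \int_\Omega w_i^k \, dx,
$$
which, upon telescoping in $k$, yields
$$
  \int_\Omega c_i^k \, dx = \int_\Omega c_i^0 \, dx - \eps \tau \sum_{j=1}^{k} \int_\Omega w_i^j \, dx.
$$
The whole task is thus reduced to bounding the defect $\eps \tau \sum_{j=1}^k \int_\Omega w_i^j \, dx$.

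For this I would invoke the discrete entropy inequality of Lemma \ref{lem.ent}, which supplies the crucial estimate $\eps \tau \sum_{j=1}^{k} \|w^j\|_{L^2(\Omega)}^2 \le \mathcal{H}[c^0]$. Two applications of Cauchy--Schwarz --- first in $x$ to pass from $L^1$ to $L^2$ on $\Omega$, then in the summation index $j$ to turn the $\eps\tau$-weighted $\ell^2$-bound into an $\ell^1$-bound --- produce
$$
  \eps \tau \sum_{j=1}^{k} \Big| \int_\Omega w_i^j \, dx \Big| \le |\Omega|^{1/2} \sqrt{\eps \, t_k \, \mathcal{H}[c^0]} \le |\Omega|^{1/2} \sqrt{\eps \, T \, \mathcal{H}[c^0]}.
$$
Choosing $\eps$ small enough that this quantity is dominated by $\gamma \min_{i=1,\ldots,N} \|c_i^0\|_{L^1(\Omega)}$ establishes \eqref{mass1}. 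The bounds \eqref{mass2} for $c_{N+1}^k$ then follow directly from $c_{N+1}^k = 1 - \sum_{i=1}^N c_i^k$ and the triangle inequality applied to \eqref{mass1}. Finally, to secure the strict positivity $\|c_{N+1}^k\|_{L^1(\Omega)} \ge \tfrac12 \|c_{N+1}^0\|_{L^1(\Omega)} > 0$, the plan is to set
$$
  \gamma_0 := \frac{\|c_{N+1}^0\|_{L^1(\Omega)}}{2\sum_{i=1}^{N} \|c_i^0\|_{L^1(\Omega)}},
$$
which depends only on $c^0$, exactly as required; the strict positivity of the denominator is guaranteed by the hypothesis $\min_i \|c_i^0\|_{L^1(\Omega)} > 0$ of Theorem \ref{thm.long} and of the numerator by the same assumption applied to $c_{N+1}^0$.

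The subtle point --- and the reason the hypotheses are phrased in such a convoluted way --- is that the entropy inequality only delivers an $\eps\tau$-weighted $\ell^2$-bound on $\|w^j\|_{L^2(\Omega)}$, so extracting a uniform-in-$k$ $\ell^1$-control via Cauchy--Schwarz inevitably introduces a factor $\sqrt{t_k}$. Consequently $\eps$ must be chosen small in terms of the time horizon $T$ in addition to $\gamma$ and $c^0$, and the ``$k \in \N$'' in the conclusion has to be read as $k \le T/\tau$ on the fixed interval $[0,T]$ on which the approximate scheme is set up. The main difficulty is therefore not any single technical estimate but organizing these dependencies so that, when the lemma is subsequently plugged into the exponential-decay argument of Theorem \ref{thm.long}, the $T$-dependence of the threshold for $\eps$ can still be accommodated --- typically by sending $\eps, \tau \to 0$ first for each fixed $T$ and only afterwards exploiting the decay of the relative entropy $\mathcal{H}^*[c]$ to pass to arbitrarily large times.
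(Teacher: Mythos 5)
Your proposal follows essentially the same route as the paper: test \eqref{w1} with $v=e_i$, telescope to isolate the defect $\eps\tau\sum_{j=1}^k\int_\Omega w_i^j\,dx$, control it via the discrete entropy inequality of Lemma \ref{lem.ent} plus Cauchy--Schwarz in $x$ and in $j$ (picking up the factor $\sqrt{t_k}\le\sqrt{T}$), and choose $\gamma_0=\|c_{N+1}^0\|_{L^1(\Omega)}\big(2\sum_{i=1}^N\|c_i^0\|_{L^1(\Omega)}\big)^{-1}$. One small but real slip: the entropy density $h(c)=\sum_i c_i(\log c_i-1)$ is nonpositive on the simplex, so ${\mathcal H}[c^0]\le 0$ and the bound $\eps\tau\sum_j\|w^j\|_{L^2(\Omega)}^2\le{\mathcal H}[c^0]$ you quote is vacuous (its square root is not even defined); the entropy inequality actually yields $\eps\tau\sum_j\|w^j\|_{L^2(\Omega)}^2\le{\mathcal H}[c^0]-{\mathcal H}[c^k]$, and one must invoke the lower bound ${\mathcal H}[c^k]\ge-(N+1)\,\mathrm{meas}(\Omega)$, as the paper does, to conclude $\eps\tau\sum_j\|w^j\|_{L^2(\Omega)}^2\le{\mathcal H}[c^0]+(N+1)\,\mathrm{meas}(\Omega)$ and then take square roots. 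With that correction the rest of your argument, including the treatment of $c_{N+1}^k$ via $c_{N+1}^k=1-\sum_{i=1}^N c_i^k$ and the reading of $k\in\N$ as $k\le T/\tau$, matches the paper's proof.
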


\begin{proof}
We recall that $\tau=T/m$ for $T>0$ and $m\in\N$. Let $k\in\{1,\ldots,m\}$.
Using the test function $v=e_i$ in \eqref{w1}, where $e_i$ is the $i$-th unit
vector of $\R^N$, we find that
$$
  \int_\Omega c^k_i dx = \int_\Omega c^{k-1}_i dx - \eps\tau\int_\Omega w_i^k dx,
  \quad i=1,\ldots,N,
$$
where we abbreviated $c^k_i=c_i(w^k)$. Solving these recursive equations, we obtain
\begin{equation}\label{aux11}
  \int_\Omega c^k_i dx = \int_\Omega c^0_i dx 
  - \eps\tau\sum_{j=1}^k\int_\Omega w_i^j dx.
\end{equation}
Because of the $\eps$-terms, we do not have discrete mass conservation 
but we will derive uniform $L^1$-bounds. 
The entropy inequality in Lemma \ref{lem.ent} shows that
\begin{align}
  {\mathcal H}[c^k] + \eps\tau\int_\Omega\big((\Delta w_i^k)^2 + (w_i^k)^2\big)dx
  &\le {\mathcal H}[c^k] 
  + \eps\tau\int_\Omega\big(|\Delta w^k|^2 + |w^k|^2\big)dx \nonumber \\
  &\le {\mathcal H}[c^{k-1}], \quad k\ge 1. \label{ent.w}
\end{align}
Solving this inequality recursively, we infer from 
${\mathcal H}[c^k]\ge -\text{meas}(\Omega)(N+1)$ that
$$
  \eps\tau \sum_{j=1}^k\|w_i^j\|_{L^2(\Omega)}^2 
  \le {\mathcal H}[c^0] - {\mathcal H}[c^k] \le {\mathcal H}[c^0] 
  + \text{meas}(\Omega)(N+1).
$$
Consequently, using $k\tau\le T$,
\begin{align*}
  \eps\tau \sum_{j=1}^k\int_\Omega|w_i^j|dx
  &\le \eps\tau C\sum_{j=1}^k\|w_i^j\|_{L^2(\Omega)}
  \le \eps\tau C\sqrt{k}\left(\sum_{j=1}^k\|w_i^j\|_{L^2(\Omega)}^2\right)^{1/2} \\
  &\le C\sqrt{\eps\tau k ({\mathcal H}[c^0]+\text{meas}(\Omega)(N+1))} \\
  &\le C\sqrt{\eps T({\mathcal H}[c^0]+\text{meas}(\Omega)(N+1))}.
\end{align*}
Let $\gamma>0$ and $0<\eps<1$ satisfy
\begin{align}
  & 0 < \gamma \le 
  \min\left\{1,\gamma_0=\left(2\sum_{i=1}^N \|c_i^0\|_{L^1(\Omega)}\right)^{-1}
  \|c_{N+1}^0\|_{L^1(\Omega)}\right\}, \label{gamma1} \\
  & 0 < \sqrt{\eps} 
  \le \frac{\gamma\min_{\ell=1,\ldots,N}\|c_\ell^0\|_{L^1(\Omega)}}{C
  \sqrt{T({\mathcal H}[c^0]+\text{meas}(\Omega)(N+1))}}. \label{gamma2}
\end{align}
Then, in view of \eqref{aux11},
$$
  (1-\gamma)\|c_i^0\|_{L^1(\Omega)} \le \|c_i^k\|_{L^1(\Omega)} 
  = \|c_i^0\|_{L^1(\Omega)} - \eps\tau\sum_{j=1}^k\int_\Omega w_i^j dx
  \le (1+\gamma)\|c_i^0\|_{L^1(\Omega)}.
$$
These relations hold for all $i=1,\ldots,N$. For $i=N+1$, we estimate
(using \eqref{mass1})
\begin{align*}
  \int_\Omega c_{N+1}^k dx
  &= \int_\Omega\left(1-\sum_{i=1}^N c_i^k\right)dx
  \ge \int_\Omega\left(1-(1+\gamma)\sum_{i=1}^N c_i^0\right)dx \\
  &= \int_\Omega c_{N+1}^0 dx - \gamma\sum_{i=1}^N\int_\Omega c_i^0 dx 
  \ge \frac12\|c_{N+1}^0\|_{L^1(\Omega)} > 0,
\end{align*}
by definition of $\gamma_0$. A similar computation yields
$$
  \int_\Omega c_{N+1}^k dx
  \le \int_\Omega\left(1-(1-\gamma)\sum_{i=1}^N c_i^0\right)dx
  = \|c_{N+1}^0\|_{L^1(\Omega)} + \gamma\sum_{i=1}^N\|c_i^0\|_{L^1(\Omega)}.
$$
This proves the lemma.
\end{proof}

For the proof of Theorem \ref{thm.long}, we introduce the following notations: 
$c^k=(c_1^k,\ldots,c_{N+1}^k)^\top$, 
$\bar c^k=(\bar c_1^k,\ldots,\bar c_{N+1}^k)^\top$, where
$\bar c_i^k=\mbox{meas}(\Omega)^{-1}\int_\Omega c_i^k dx$ for $i=1,\ldots,N+1$,
$k\ge 0$. Furthermore, we set
$w^k=(w_1^k,\ldots,w_N^k)^\top$, and $\bar w^k=(\bar w_1^k,\ldots,\bar w_N^k)^\top$,
where $\bar w_i^k=\log(\bar c_i^k/\bar c_{N+1}^k)$ for $i=1,\ldots,N$.
We recall the definition of the relative entropy
$$
  {\mathcal H}^*[c^k] 
  = \sum_{i=1}^{N+1}\int_\Omega c_i^k\log\frac{c_i^k}{\bar c_i^0}dx.
$$
Employing the test function $w^k-\bar w^k$ in \eqref{w1}, we obtain
\begin{align*}
  \frac{1}{\tau}\int_\Omega(c'(w^k)-c'(w^{k-1}))\cdot(w^k-\bar w^k)dx
  &+ \int_\Omega \na w^k:B(w^k)\na w^k dx \\
  &+ \eps\int_\Omega(|\Delta w^k|^2 + w^k\cdot(w^k-\bar w^k))dx = 0.
\end{align*}
We estimate the integrals term by term.

Using the definition $c_{N+1}^k=1-\sum_{i=1}^N c_i^k$, a computation shows that
$$
  (c'(w^k)-c'(w^{k-1}))\cdot w^k = (c^k-c^{k-1})\cdot\log c^k.
$$
Therefore, we find that
\begin{align*}
  \int_\Omega(c'(w^k)-c'(w^{k-1}))\cdot(w^k-\bar w^k)dx
  &= \int_\Omega(c^k-c^{k-1})\cdot\log\frac{c^k}{\bar c^k}dx \\
  &= \int_\Omega(c^k-c^{k-1})\cdot\log\frac{c^k}{\bar c^0}dx
  + \int_\Omega(c^k-c^{k-1})\cdot\log\frac{\bar c^0}{\bar c^k}dx.
\end{align*}
The first integral on the right-hand side can be estimated by
employing the convexity of $h(c)$ as a function of $c_1,\ldots,c_{N+1}$, which 
implies that
\begin{align*}
  h(c(w^k))-h(c(w^{k+1})) &\le \na h(c(w^k))\cdot(c(w^k)-c(w^{k-1})) \\
  &= \log(c^k)\cdot(c^k-c^{k-1}).
\end{align*}
Thus, because of $\sum_{i=1}^{N+1}c_i^k=1$ and the definition of ${\mathcal H}^*[c^k]$,
$$
  \int_\Omega(c^k-c^{k-1})\cdot\log\frac{c^k}{\bar c^0}dx
  \ge {\mathcal H}^*[c^k] - {\mathcal H}^*[c^{k-1}].
$$
For the second integral, we employ the bounds \eqref{mass1}-\eqref{mass2}
as well as $\gamma<1$ and $\eps>0$ sufficiently small, which yields
$$
  \frac{1}{1+\gamma} \le \frac{\bar c_i^0}{\bar c_i^k} \le \frac{1}{1-\gamma}, \quad
  i=1,\ldots,N, \quad
  \frac{1}{1+\bar\gamma} \le \frac{\bar c_{N+1}^0}{\bar c_{N+1}^k} 
  \le \frac{1}{1-\bar\gamma},
$$
where $\bar\gamma=\gamma(1/\bar c_{N+1}^0-1)>0$.  
Here, we have used again that $\sum_{i=1}^{N+1}\bar c_i^0 = 1$.
Then, with $C_1=\mbox{meas}(\Omega)$,
\begin{align*}
  \int_\Omega(c^k-c^{k-1})\cdot\log\frac{\bar c^0}{\bar c^k}dx
  &\ge -\int_\Omega \sum_{i=1}^{N}c^k_i\log(1+\gamma)dx
  - \int_\Omega c_{N+1}^k\log(1+\bar\gamma)dx \\
  &\phantom{xx}{}+ \int_\Omega \sum_{i=1}^{N}c^{k-1}_i\log(1-\gamma)dx 
  + \int_\Omega c_{N+1}^{k-1}\log(1-\bar\gamma)dx \\
  &\ge -C_1\log\frac{(1+\gamma)(1+\bar\gamma)}{(1-\gamma)(1-\bar\gamma)}.
\end{align*}

We have already proved that
$$
  \int_\Omega \na w^k:B(w^k)\na w^k dx \ge \frac{4}{\Delta}\int_\Omega
  |\na\sqrt{c^k}|^2 dx.
$$
Applying Young's inequality to the $\eps$-term, it follows that
$$
  {\mathcal H}^*[c^k] - {\mathcal H}^*[c^{k-1}]
  + \frac{4\tau}{\Delta}\int_\Omega|\na\sqrt{c^k}|^2 dx 
  \le \frac{\eps\tau}{2}\int_\Omega |\bar w^k|^2 dx 
  + C_1\log\frac{(1+\gamma)(1+\bar\gamma)}{(1-\gamma)(1-\bar\gamma)}.
$$
The logarithmic Sobolev inequality \cite{Gro75} as well as the bounds
\eqref{mass1}-\eqref{mass2} show that
\begin{align*}
  {\mathcal H}^*[c^k] 
  &= \sum_{i=1}^{N+1}\int_\Omega c_i^k\log\frac{c_i^k}{\bar c_i^k}dx
  + \sum_{i=1}^{N+1}\int_\Omega c_i^k\log\frac{\bar c_i^k}{\bar c_i^0}dx \\
  &\le C(\Omega)\sum_{i=1}^{N+1}\int_\Omega|\na\sqrt{c^k_i}|^2 dx
  + \sum_{i=1}^{N}\int_\Omega c_i^k\log(1+\gamma)dx
  + \int_\Omega c_{N+1}^k\log(1+\bar\gamma)dx \\
  &\le C(\Omega)\int_\Omega|\na\sqrt{c^k}|^2 dx + C_1\log((1+\gamma)(1+\bar\gamma)),
\end{align*}
from which we infer that
$$
  (1 + C_2\tau){\mathcal H}^*[c^k]
  \le {\mathcal H}^*[c^{k-1}] + \frac{\eps\tau}{2}\int_\Omega |\bar w^k|^2 dx
  + C_\gamma,
$$
where $C_2=4/(C(\Omega)\Delta)$ and, for $\tau\le 1$,
$$
  C_\gamma = C_1\log\frac{(1+\gamma)(1+\bar\gamma)}{(1-\gamma)(1-\bar\gamma)}
  + \frac{4C_1}{C(\Omega)\Delta}\log((1+\gamma)(1+\bar\gamma)).
$$
We can estimate $\bar w^k$ by using the bounds for $\bar c^k$ of Lemma \ref{lem.L1}:
$$
  \int_\Omega |\bar w^k|^2 dx 
  \le \sum_{i=1}^N\int_\Omega\big(|\log\bar c_i^k| + |\log\bar c_{N+1}^k|\big)^2 dx 
  \le C_3,
$$
where $C_3>0$ depends on the $L^1$-norm of $c^0$ and $\gamma$. Hence
$$
  {\mathcal H}^*[c^k] \le (1 + C_2\tau)^{-1}
  {\mathcal H}^*[c^{k-1}] 
  + \frac{\eps\tau}{2}C_3(1 + C_2\tau)^{-1} + C_\gamma(1 + C_2\tau)^{-1}.
$$
Solving these recursive inequalities, we conclude that
$$
  {\mathcal H}^*[c^k] 
  \le (1 + C_2\tau)^{-k}
  {\mathcal H}^*[c^0] + \frac{\eps\tau}{2} C_3\sum_{j=1}^k(1 + C_2\tau)^{-j}
  + C_\gamma\sum_{j=1}^k(1 + C_2\tau)^{-j}.
$$
The sum contains the first terms of the geometric series:
$$
  \sum_{j=1}^k(1 + C_2\tau)^{-j}
  \le \frac{1}{1-(1 + C_2\tau)^{-1}} - 1
  = \frac{1}{C_2\tau},
$$
yielding
$$
  {\mathcal H}^*[c^{(\tau)}(\cdot,t)] 
  \le (1 + C_2\tau)^{-t/\tau}{\mathcal H}^*[c^0] + \frac{\eps C_3}{2C_2}
  +\frac{C_\gamma}{C_2\tau}, \quad t>0.
$$
Now, we choose sequences for $\eps$, $\tau$, and $\gamma$ such that
$\gamma\to 0$, $C_\gamma/\tau\to 0$, and \eqref{gamma2} is satisfied
(then also $\eps\to 0$).
This is possible since $C_\gamma\to 0$ as $\gamma\to 0$.
Then, because of $c^{(\tau)}_i\to c_i$ in $L^2(0,T;L^2(\Omega))$ for $i=1,\ldots,N+1$,
the limit $(\eps,\tau,\gamma)\to 0$ leads to
$$
  {\mathcal H}^*[c(\cdot,t)] \le e^{-C_2 t}{\mathcal H}^*[c^0], \quad t\ge 0.
$$
Moreover, we can pass to the limit $\eta\to 0$.
Finally, since $\int_\Omega c_i(\cdot,t)dx=\int_\Omega c_i^0 dx$ for
$i=1,\ldots,N+1$ (see Lemma \ref{lem.L1}), 
we can apply the Csisz\'ar-Kullback inequality \cite{UAMT00} to finish the proof.


\appendix

\section{Derivation of the Maxwell-Stefan relations}\label{sec.deriv}

The Maxwell-Stefan relations \eqref{eq2} for an ideal gas mixture of $N+1$ components
are derived by assuming that the thermodynamical driving force $d_i$ of the 
$i$-th component balances the friction force $f_i$. We suppose constant
temperature and pressure. Our derivation follows \cite{Bot11}.
For details on the modeling, we refer to the monographs \cite{Gio99,WeKr00}. 

The driving force $d_i$ is assumed to be proportional
to the gradient of the chemical potential $\mu_i$ \cite[Section 3.3]{WeKr00}:
\begin{equation}\label{di}
  d_i = \frac{c_i}{RT}\na\mu_i, \quad i=1,\ldots,N+1.
\end{equation}
Here, $R$ is the gas constant, $T$ the (constant) temperature, and 
$c_i=\rho_i/m_i$ is the molar concentration of the $i$-th species
with the mass density $\rho_i$ and the
molar mass $m_i$ of the $i$-th species. 
For more general expressions of $d_i$, we refer to \cite[Chapter 7]{Gio99}.
The chemical potential under isothermal, isobaric conditions is defined by
$\mu_i=\pa G/\pa c_i$, where $G$ is the Gibbs free energy. 
In an ideal gas, we have $\na\mu_i = RT\na\log c_i$ implying that $d_i=\na c_i$.

The mutual friction force between the $i$-th and the $j$-th component 
is supposed to be proportional to the relative velocity and the amount of 
molar mass such that
\begin{equation}\label{fi}
  f_i = -\sum_{j=1,\,j\neq i}^{N+1}\frac{c_i c_j(u_i-u_j)}{D_{ij}},
\end{equation}
where $D_{ij}$ are the binary Maxwell-Stefan diffusivities \cite[Formula (16)]{KrWe97}. 
By the Onsager reciprocal relation, the diffusion matrix $(D_{ij})$ 
is symmetric \cite{KrTa86}. 
Then, using \eqref{di}, \eqref{fi}, and the definition $J_k=c_ku_k$, 
the balance $d_i=f_i$ becomes
$$
  \na c_i = -\sum_{j=1,\,j\neq i}^{N+1}\frac{c_j J_i-c_i J_j}{D_{ij}}, \quad
  i=1,\ldots,N+1,
$$
which equals \eqref{eq2}. 

Another derivation of the Maxwell-Stefan equations \eqref{eq1} and \eqref{eq2} 
starts from the Boltzmann transport equation for an isothermal ideal gas 
mixture \cite{BGS11}.
The main assumptions are that a diffusion scaling is possible and that the scattering 
rates are independent of the microscopic velocities.
Then, in the formal limit of vanishing mean-free paths, \eqref{eq1} and \eqref{eq2}
are derived. The diffusivities $D_{ij}$ are determined by 
$$
  D_{ij} = \frac{T(m_i+m_j)}{m_i m_j S_{ij}},
$$
where $S_{ij}$ are the averaged scattering rates of the collision operator associated
to the components $i$ and $j$. Since $S_{ij}=S_{ji}$, the diffusion matrix
$(D_{ij})$ is symmetric. We also observe that $(D_{ij})$ does not depend on
the concentrations which is consistent with the assumption made in this paper.


\section{Matrix analysis}\label{sec.app}

We recall some results on the eigenvalues of special matrices such as
symmetric, quasi-positive, or rank-one matrices. 
Although most of the results in this appendix are valid
for matrices with complex elements, we consider the real case only and
refer to the literature for the general situation \cite{HoJo85,Ser10,Thi03}.

A vector $x\in\R^n$ ($n\in\N$)
is called {\em positive} if all components are nonnegative and at
least one component is positive. 
It is called {\em strictly positive} if all components are positive \cite{Thi03}.
Let $A=(a_{ij})\in\R^{n\times n}$ be a square matrix. 
The unit matrix in  $\R^{n\times n}$ is denoted by $I_n$. 
Let $\sigma(A)$ denote the spectrum of $A$.
The spectral radius of $A$ is defined by $r(A)=\max\{|\lambda|:\lambda\in\sigma(A)\}$,
and the spectral bound of $A$ equals $s(A)=\max\{\Re(\lambda):\lambda\in\sigma(A)\}$.
An eigenvalue of $A$ is called {\em semisimple} if its algebraic and geometric
multiplicities coincide and {\em simple} if its algebraic multiplicity 
(and hence also its geometric multiplicity) equals one. The following theorem
is proved in \cite[Theorem 3.4]{Ser10}.

\begin{theorem}\label{thm.semi}
Let $A\in\R^{n\times n}$ and let $\lambda\in\sigma(A)$ be a real eigenvalue. 
Then $\lambda$ is semisimple if and only if
$$
  \R^n = \mbox{\rm im}(A-\lambda I_n)\oplus\mbox{\rm ker}(A-\lambda I_n).
$$
\end{theorem}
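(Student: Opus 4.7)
My plan is to reduce to the case $\lambda=0$ by replacing $A$ with $M=A-\lambda I_n$, since $\lambda$ is semisimple for $A$ exactly when $0$ is semisimple for $M$, and the image/kernel decomposition transforms in the obvious way. The rank-nullity theorem already gives $\dim\text{im}(M)+\dim\text{ker}(M)=n$, so the asserted direct-sum decomposition $\R^n=\text{im}(M)\oplus\text{ker}(M)$ is equivalent to the single condition
\[
  \text{im}(M)\cap\text{ker}(M)=\{0\}.
\]
Thus the whole task reduces to showing that $0$ is a semisimple eigenvalue of $M$ if and only if $\text{im}(M)\cap\text{ker}(M)=\{0\}$.

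For this equivalence I would pass through the intermediate characterization that the ascending chain of kernels $\text{ker}(M)\subseteq\text{ker}(M^2)\subseteq\cdots$ stabilizes already at the first step, i.e.\ $\text{ker}(M)=\text{ker}(M^2)$. The geometric multiplicity of $0$ equals $\dim\text{ker}(M)$, and the algebraic multiplicity equals $\dim\text{ker}(M^n)$ (the dimension of the generalized eigenspace, obtainable from the Jordan form or from the fact that the kernel chain stabilizes and its stable value is the generalized eigenspace). These two dimensions coincide precisely when the chain never grows beyond step one.

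It then remains to prove the elementary equivalence $\text{ker}(M)=\text{ker}(M^2)\iff\text{im}(M)\cap\text{ker}(M)=\{0\}$. If $x=My$ lies also in $\text{ker}(M)$, then $M^2y=Mx=0$, so by the kernel-equality assumption $My=x=0$; conversely, if $x\in\text{ker}(M^2)$ then $Mx\in\text{im}(M)\cap\text{ker}(M)=\{0\}$, so $x\in\text{ker}(M)$. This closes the chain of equivalences.

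The only genuinely non-routine input is the identification of the algebraic multiplicity of $0$ with $\dim\text{ker}(M^n)$; that is where one either invokes Jordan normal form, or (to stay in $\R$ without appealing to a complex Jordan form) argues via the primary decomposition associated with the minimal polynomial, noting that for a real eigenvalue $\lambda$ the factor $(t-\lambda)^{m}$ of the minimal polynomial and the factor $(t-\lambda)^{a}$ of the characteristic polynomial satisfy $m=1$ exactly when $a$ equals the geometric multiplicity. I expect this to be the main technical step; everything else is direct manipulation with $\text{im}$ and $\text{ker}$ and the rank-nullity theorem.
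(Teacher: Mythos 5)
Your proof is correct. Note that the paper does not actually prove this statement: it only cites \cite[Theorem 3.4]{Ser10}, so there is no in-paper argument to compare against. Your chain of equivalences is the standard one and is complete: the reduction to $M=A-\lambda I_n$ and the rank--nullity observation correctly turn the direct-sum condition into $\mbox{\rm im}(M)\cap\mbox{\rm ker}(M)=\{0\}$; the elementary equivalence with $\mbox{\rm ker}(M)=\mbox{\rm ker}(M^2)$ is verified in both directions; and the identification of the algebraic multiplicity of $0$ with $\dim\mbox{\rm ker}(M^n)$, together with the stabilization of the kernel chain, is the right (and only nontrivial) input. Your remark that for a real eigenvalue this last step can be done over $\R$ via the primary decomposition, without passing to a complex Jordan form, is a valid way to keep the argument entirely real; alternatively one can note that the dimension of a kernel of a real matrix is unchanged under complexification, so the complex Jordan-form count transfers directly.
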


The matrix $A$ is called
{\em quasi-positive} if $A\neq 0$ and $a_{ij}\ge 0$ for all $i\neq j$
and {\em irreducible} if for any proper nonempty subset $M\subset\{1,\ldots,n\}$
there exist $i\in M$ and $j\not\in M$ such that $a_{ji}\neq 0$. 
If $n=1$, $A$ is called irreducible if $A\neq 0$. For quasi-positive
and irreducible matrices, the following result holds 
\cite[Theorem A.45, Remark A.46]{Thi03}.

\begin{theorem}[Perron-Frobenius]\label{thm.PF}
Let $A$ be a quasi-positive and irreducible matrix. 
Then its spectral bound $s(A)$ is a simple eigenvalue of
$A$ associated with a strictly positive eigenvector and
$s(A)>\Re(\lambda)$ for all $\lambda\in\sigma(A)$, $\lambda\neq s(A)$.
All eigenvalues of $A$ different from $s(A)$ have no positive eigenvector.
\end{theorem}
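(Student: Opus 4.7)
The plan is to reduce the quasi-positive case to the classical case of componentwise nonnegative matrices by a scalar shift, and then invoke the standard Perron-Frobenius theorem for nonnegative irreducible matrices. Choose $\alpha>0$ large enough that $B:=A+\alpha I_n$ has all nonnegative entries; since the off-diagonal entries of $A$ are already nonnegative, any $\alpha\ge \max_i(-a_{ii})^{+}$ works. The matrix $B$ is still irreducible, because irreducibility depends only on the zero pattern of off-diagonal entries. Moreover $\sigma(B)=\sigma(A)+\alpha$ with identical eigenvectors, so $s(A)=s(B)-\alpha=r(B)-\alpha$, and it suffices to prove all claims for $B$ in place of $A$, with $r(B)$ in place of $s(A)$.

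For the nonnegative irreducible case I would use the Collatz-Wielandt variational principle. Set $f(x)=\min_{i:\,x_i>0}(Bx)_i/x_i$ on the nonzero nonnegative cone normalized to the unit simplex. By compactness, $f$ attains its supremum $\rho$ at some positive $v$, and a standard perturbation argument forces $Bv=\rho v$: otherwise a small increase in a coordinate where the ratio exceeds $\rho$ would strictly enlarge $f(v)$. Irreducibility then upgrades $v$ to a \emph{strictly} positive eigenvector, since otherwise the zero/nonzero index split would furnish a proper invariant subspace. The elementary comparison $|\lambda|\,|z|\le|Bz|\le B|z|$ for an eigenvector $z$ with eigenvalue $\lambda$, combined with Collatz-Wielandt, yields $|\lambda|\le\rho=r(B)$, so $s(B)=r(B)$. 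Strict dominance $\Re(\lambda)<r(B)$ for $\lambda\ne r(B)$ follows because eigenvalues on the spectral circle $|\lambda|=r(B)$ different from $r(B)$ are necessarily off the positive real axis and hence have strictly smaller real part.

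Algebraic simplicity of $r(B)$ is the most delicate step, and I would argue by duality. Since $B^\top$ is nonnegative and irreducible, it also admits a strictly positive eigenvector $u$ with $B^\top u=r(B)u$. If $r(B)$ were attached to a nontrivial Jordan block, there would exist a generalized eigenvector $w$ with $(B-r(B)I_n)w=v$, and then $u^\top v=u^\top(B-r(B)I_n)w=((B^\top-r(B)I_n)u)^\top w=0$, contradicting $u^\top v>0$. The same duality trick handles the last assertion of the theorem: if $Aw=\lambda w$ with $w$ a nonzero positive vector, then $s(A)\langle u,w\rangle=\langle A^\top u,w\rangle=\langle u,Aw\rangle=\lambda\langle u,w\rangle$, and since $u$ is strictly positive while $w$ is positive and nonzero, $\langle u,w\rangle>0$, forcing $\lambda=s(A)$.

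The main obstacle is the strictly positive character of the Collatz-Wielandt maximizer and the \emph{algebraic} (not merely geometric) simplicity of the Perron root; both rest on irreducibility in a nontrivial way. Everything else—the existence of the maximizer, the identification $s(B)=r(B)$, and the exclusion of other positive eigenvectors—is a clean consequence of compactness, the variational principle, and the dual Perron vector. In a self-contained write-up I would follow a standard reference such as Horn-Johnson or Thieme (as the paper itself does) rather than reproduce the full argument here.
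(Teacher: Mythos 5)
The paper does not actually prove Theorem~\ref{thm.PF}: it is quoted as background from \cite[Theorem A.45, Remark A.46]{Thi03} in Appendix~\ref{sec.app}, so there is no in-paper argument to compare against. Your route --- shifting to $B=A+\alpha I_n$, applying the Collatz--Wielandt variational principle, and using the left Perron vector for duality --- is the standard textbook proof, and most of it is sound: the shift preserves irreducibility and translates the spectrum, irreducibility upgrades a nonnegative eigenvector to a strictly positive one, the comparison $|\lambda|\,|z|\le B|z|$ gives $s(B)=r(B)$, the observation that $\Re(\lambda)<|\lambda|$ for any $\lambda$ on the spectral circle other than $r(B)$ gives strict real-part dominance, and the pairing with the strictly positive left eigenvector $u$ cleanly disposes of the final assertion about positive eigenvectors.

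There is, however, a genuine gap in the simplicity step. Your duality argument presupposes that failure of simplicity produces a generalized eigenvector $w$ with $(B-r(B)I_n)w=v$; this only rules out a Jordan block of size at least two sitting over the line spanned by $v$. It does not rule out geometric multiplicity $\ge 2$, i.e.\ a second eigenvector $v'$ for $r(B)$ independent of $v$, in which case no such $w$ need exist and $u^\top v>0$ yields no contradiction. You need geometric simplicity first: taking $v'$ real (possible since $r(B)$ is real) and replacing $v'$ by $-v'$ if necessary so that some component is positive, set $t^*=\max\{t\in\R: v-tv'\ge 0\}$; then $v-t^*v'$ is a nonzero nonnegative eigenvector for $r(B)$ with a vanishing component, contradicting the strict positivity that irreducibility forces on every nonnegative eigenvector. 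Only then does your Jordan-chain computation finish algebraic simplicity. A smaller issue of the same flavour sits in your perturbation step: increasing the single coordinate $v_{i_0}$ where $(Bv)_{i_0}>\rho v_{i_0}$ improves the ratio $(Bv)_j/v_j$ only at those tight indices $j$ with $b_{ji_0}>0$, so $f$ need not strictly increase; the standard repair is to apply $(I_n+B)^{n-1}$ to the nonnegative, nonzero vector $Bv-\rho v$ and use irreducibility to make it strictly positive, which does contradict maximality of $\rho$.
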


The spectrum of rank-one matrices can be determined explicitly
\cite[Section 3.8, Lemma 2]{Ser10}. Notice that
any rank-one matrix $A\in\R^{n\times m}$ can be written in the form
$A=x\otimes y$, where $x\in\R^n$, $y\in\R^m$. 

\begin{proposition}[Spectrum of rank-one matrix]\label{prop.xy}
Let $x$, $y\in\R^n$. Then $\sigma(x\otimes y)=\{0,\ldots,0,x\cdot y\}$, i.e., 
$x\cdot y$ is a simple eigenvalue.
\end{proposition}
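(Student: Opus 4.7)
The plan is to exploit the fact that $A := x\otimes y$ has rank at most one, so the spectrum is almost entirely forced by a dimension count, and then fix the remaining eigenvalue using the trace.

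First, I would dispose of the degenerate cases. If $x=0$ or $y=0$, then $A=0$, so $\sigma(A)=\{0\}$ and also $x\cdot y=0$, so the claim holds trivially. Henceforth assume both $x$ and $y$ are nonzero.

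Second, I would produce $n-1$ zero eigenvalues directly. By definition $(Av)_i = x_i(y\cdot v)$ for every $v\in\R^n$, so $Av=0$ whenever $v\in\{y\}^\perp$. Since $y\neq 0$, the subspace $\{y\}^\perp$ has dimension $n-1$, so $0$ is an eigenvalue of $A$ with geometric (and hence algebraic) multiplicity at least $n-1$.

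Third, I would identify the remaining eigenvalue. A direct computation gives $Ax = x\,(y\cdot x) = (x\cdot y)\,x$, so $x\neq 0$ is an eigenvector associated with the eigenvalue $x\cdot y$. To see that this accounts for the full spectrum and that $x\cdot y$ is simple whenever it is nonzero, I would invoke the trace identity
\[
  \sum_{\lambda\in\sigma(A)}\lambda = \operatorname{tr}(A) = \sum_{i=1}^n x_i y_i = x\cdot y,
\]
where the sum on the left is taken with algebraic multiplicities. Combined with the $n-1$ zeros produced in the previous step, this forces the $n$-th eigenvalue to equal $x\cdot y$ and its algebraic multiplicity to be exactly one when $x\cdot y\neq 0$. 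Equivalently, one may note that the characteristic polynomial of a rank-one matrix is $\det(\lambda I_n - A) = \lambda^{n-1}\bigl(\lambda - x\cdot y\bigr)$, which gives the same conclusion at once.

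There is no real obstacle here; the only subtlety is the interpretation of the multiset notation $\{0,\ldots,0,x\cdot y\}$ in the case $x\cdot y=0$, where the "simple eigenvalue" collapses into the zero eigenvalue and the algebraic multiplicity of $0$ becomes $n$. The trace argument handles this case uniformly, so no separate treatment is needed.
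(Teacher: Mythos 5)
Your proof is correct. The paper does not actually prove Proposition~\ref{prop.xy}; it only cites \cite[Section 3.8, Lemma 2]{Ser10}, so there is no in-paper argument to compare against. Your route --- $n-1$ zero eigenvalues from $\ker(x\otimes y)\supseteq\{y\}^\perp$, the eigenpair $(x\otimes y)x=(x\cdot y)x$, and the trace identity (equivalently, the characteristic polynomial $\lambda^{n-1}(\lambda-x\cdot y)$) to pin down the last eigenvalue with its algebraic multiplicity --- is the standard self-contained argument and fills the gap cleanly. Your remark on the degenerate case $x\cdot y=0$ is also apt: the statement's phrase ``$x\cdot y$ is a simple eigenvalue'' is only literally accurate when $x\cdot y\neq 0$, which is the situation in which the paper applies the proposition (there $x=y=\sqrt{c}$ and $x\cdot y=\sum_i c_i=1$, scaled by $\alpha>0$).
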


We recall two results on eigenvalues of products and sums of symmetric matrices.

\begin{theorem}\label{thm.hpd}
Let $A\in\R^{n\times n}$ be symmetric and positive definite and let 
$B\in\R^{n\times n}$ be symmetric. Then the number
of positive eigenvalues of $AB$ equals that for $B$. 
\end{theorem}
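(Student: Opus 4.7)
The plan is to reduce the claim to Sylvester's law of inertia by rewriting $AB$ as a matrix similar to a symmetric congruence of $B$. The key observation is that a symmetric positive definite matrix admits a symmetric positive definite square root.

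First I would introduce the symmetric positive definite square root $A^{1/2}\in\R^{n\times n}$, which exists and is invertible because $A$ is symmetric and positive definite (diagonalize $A=Q^\top D Q$ with $D$ diagonal and strictly positive, then set $A^{1/2}=Q^\top D^{1/2}Q$). Then I would write
$$
  AB = A^{1/2}\bigl(A^{1/2}B A^{1/2}\bigr)A^{-1/2},
$$
so that $AB$ is similar to the matrix $M := A^{1/2}BA^{1/2}$. In particular, $\sigma(AB)=\sigma(M)$ with matching algebraic multiplicities, so $AB$ and $M$ have the same number of positive eigenvalues.

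Next I would note that $M = A^{1/2}BA^{1/2} = (A^{1/2})^\top B (A^{1/2})$ is symmetric (since $A^{1/2}$ is symmetric and $B$ is symmetric) and is a congruence transformation of $B$ with the invertible matrix $A^{1/2}$. By Sylvester's law of inertia, congruent symmetric matrices share the same inertia, i.e.\ the same numbers of positive, negative, and zero eigenvalues. Therefore $M$ and $B$ have the same number of positive eigenvalues. Chaining the two equalities yields the claim.

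There is no real obstacle here; the only subtlety is to be explicit that $A^{1/2}$ is symmetric (so that $M$ is symmetric and the congruence framework applies) and invertible (so that the similarity $AB\sim M$ is legitimate). Both properties follow immediately from $A$ being symmetric positive definite, so the proof is essentially a one-line application of Sylvester's law after the similarity observation.
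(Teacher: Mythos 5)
Your argument is correct and complete: the similarity $AB=A^{1/2}\bigl(A^{1/2}BA^{1/2}\bigr)A^{-1/2}$ combined with Sylvester's law of inertia for the congruence $M=(A^{1/2})^\top B A^{1/2}$ is exactly the standard proof. The paper does not prove this theorem itself but defers to \cite[Prop.~6.1]{Ser10}, where essentially this same square-root/inertia argument is given, so there is nothing further to reconcile.
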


For a proof, we refer to \cite[Prop.~6.1]{Ser10}.

\begin{theorem}[Weyl]\label{thm.weyl}
Let $A$, $B\in\R^{n\times n}$ be symmetric and let the eigenvalues $\lambda_i(A)$
of $A$ and $\lambda_i(B)$ of $B$ be arranged in increasing order. Then,
for $i=1,\ldots,n$,
$$
  \lambda_i(A) + \lambda_1(B) \le \lambda_i(A+B)\le \lambda_i(A) + \lambda_n(B).
$$
\end{theorem}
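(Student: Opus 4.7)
My plan is to prove Weyl's inequality via the Courant--Fischer min--max characterization of eigenvalues of real symmetric matrices, which is the standard route.

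First, I would recall (or establish) the Courant--Fischer identity: for any symmetric $M\in\R^{n\times n}$ with eigenvalues $\lambda_1(M)\le\cdots\le\lambda_n(M)$,
$$
  \lambda_i(M) = \min_{\substack{S\subset\R^n\\\dim S=i}}\ \max_{\substack{x\in S\\x\neq 0}}\frac{x^\top M x}{x^\top x}
  = \max_{\substack{S\subset\R^n\\\dim S=n-i+1}}\ \min_{\substack{x\in S\\x\neq 0}}\frac{x^\top M x}{x^\top x}.
$$
This is proved by choosing the trial subspace spanned by the first $i$ (respectively last $n-i+1$) eigenvectors of $M$ in an orthonormal eigenbasis, then using a dimension argument to see that any competing subspace must intersect the orthogonal complement, which furnishes the opposite inequality.

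Next, I would use the Rayleigh quotient bounds for the symmetric perturbation: for every nonzero $x\in\R^n$,
$$
  \lambda_1(B)\,x^\top x \ \le\ x^\top B x\ \le\ \lambda_n(B)\,x^\top x.
$$
Adding $x^\top A x$ across gives $x^\top A x+\lambda_1(B)\,x^\top x\le x^\top(A+B)x\le x^\top A x+\lambda_n(B)\,x^\top x$. Dividing by $x^\top x$ and taking, over every subspace $S$ of dimension $i$, the maximum of the Rayleigh quotient yields
$$
  \max_{x\in S}\frac{x^\top(A+B)x}{x^\top x}\ \le\ \max_{x\in S}\frac{x^\top Ax}{x^\top x}+\lambda_n(B).
$$
Taking the minimum over all $i$-dimensional $S$ and applying the min--max identity to both $A+B$ and $A$ gives $\lambda_i(A+B)\le\lambda_i(A)+\lambda_n(B)$, which is the upper Weyl bound. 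The lower bound $\lambda_i(A+B)\ge\lambda_i(A)+\lambda_1(B)$ follows by the symmetric argument using the max--min form of Courant--Fischer together with the lower Rayleigh bound $x^\top B x\ge\lambda_1(B)x^\top x$.

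The only delicate step is the Courant--Fischer characterization itself; once that is in hand, Weyl's inequalities are a direct consequence of the monotonicity of the Rayleigh quotient under the addition of a symmetric matrix with controlled extremal eigenvalues. No further structure of $A$ or $B$ is needed beyond symmetry, and the proof is dimension-free in the sense that it works uniformly in $i\in\{1,\dots,n\}$.
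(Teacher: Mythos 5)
Your proof is correct and follows the standard Courant--Fischer route; the paper itself does not prove this theorem but defers to \cite[Theorem 4.3.1]{HoJo85}, whose argument is essentially the one you give (min--max characterization plus the Rayleigh quotient bounds $\lambda_1(B)x^\top x\le x^\top Bx\le\lambda_n(B)x^\top x$). No gaps.
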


A proof is given in \cite[Theorem 4.3.1]{HoJo85}.


\end{document}